\documentclass[]{interact}

\usepackage{epstopdf}
\usepackage[caption=false]{subfig}

\theoremstyle{plain}
\newtheorem{theorem}{Theorem}[section]

\newtheorem{corollary}[theorem]{Corollary}
\newtheorem{proposition}[theorem]{Proposition}
\usepackage{tikz-cd}
\theoremstyle{definition}
\newtheorem{definition}[theorem]{Definition}
\newtheorem{example}[theorem]{Example}

\theoremstyle{remark}

\begin{document}
	
	\articletype{ARTICLE TEMPLATE}
	
	\title{Impartial games on two finite Groups}
	
	\author{\name{Ratan Lal\thanks{CONTACT Ratan Lal. Email: vermarattan789@gmail.com}, Muskan\thanks{CONTACT Muskan. Email: muskanmaheshwari0220@gmail.com} and Vipul Kakkar\thanks{CONTACT Vipul Kakkar. Email: vplkakkar@gmail.com}}
		\affil{Desh Bhagat Pandit Chetan Dev Government College of Education, Faridkot, Punjab, India}	
		\affil{Department of Mathematics, Central University of Rajasthan, Rajasthan, India.}
		\affil{Department of Mathematics, Central University of Rajasthan, Rajasthan, India.}
	}
	

	\maketitle
	
	\begin{abstract}
	In this paper, we study impartial achievement games and impartial avoidance games introduced by Anderson and Harary. Using the criteria of maximal subgroups, we study the game for Frobenius groups and non-abelian groups with all abelian subgroups.
	\end{abstract}
	
\begin{keywords}
	Impartial game; Maximal subgroups; Structure digraph; Structure diagram
\end{keywords}
\begin{amscode}91A46; 20D30\end{amscode}
	
	\section{Introduction}\label{sec1}
	
An \textit{impartial game} is a two player game in which each player chooses its move in a pre-defined manner and both the players know about all possible moves of each other. Anderson and Harary \cite{2} first introduced two impartial games, namely \textit{the achievement game} and \textit{the avoidance game} on finite groups. In this game, each player selects a yet-unselected element of a given finite group until the group is generated. In the achievement game, the player who generates the group from the jointly selected elements wins the game. This game is denoted by $GEN$. In the avoidance game, the player who cannot choose an element from the group without getting a generating set, loses the game. This game is denoted by $DNG$. Such games are studied by many authors as \cite{bn}, \cite{1} and \cite{2}.

Let $S$ be a set of ordinals. Then the \textit{minimum excludant} of the set $A$ is defined as the smallest ordinal not contained in the set $S$. The \textit{nim-value} of a position $P$ is defined as the minimum excludant of the set of nim-values of the options of $P$.
The main problem in the theory of impartial combinatorial games is to find the nim-value of the game. The nim-values are vital as they determine the outcome of the game. Ernst and Sieben \cite{1} developed some theoretical tools using maximal subgroups of a group that allow the determination of the nim-values of the achievement and the avoidance games for a variety of familiar groups. They introduced the \textit{structure diagram} of a game, which is an identification digraph of the game digraph that is compatible with the nim-values of the positions. A \textit{game digraph} is defined as the digraph whose vertices are the positions of the game, every position is connected to its options by arrows and every position is labeled by the nim-value of the corresponding position. The digraph obtained by identifying the equivalent positions of a digraph is called \textit{identification digraph}.

The main computational and theoretical tool introduced in \cite{1} is the simplified structure diagram of a game. Using this tool, the authors proved many important results and studied the nim-values for cyclic groups, dihedral gruops, abelian groups, symmetric groups and alternating groups.

In this paper, we have used the method and terminology developed by Ernst and Sieben \cite{2} and studied both the impartial games. In section \ref{sec2}, we have given some preliminaries. In subsection \ref{sub1}, we have defined the impartial games on finite groups and their nim-values. In subsections \ref{sub2} and \ref{sub3}, we have defined the achievement game and the avoidance game on finite groups. Some important results are also reproduced in these subsections. In section \ref{sec3} and \ref{sec4}, we have studied the the achievement game and the avoidance game on Frobenius groups and non-abelian group with all abelian subgroups respectively.

Throughout the paper, $\mathbb{Z}_{n}$ will denote the cyclic group of order $n$ under the binary operation of addition modulo $n$, $\mathbb{Z}_{1}$ will denote the trivial group and $\gcd$ will denote the greatest common divisor. All the groups considered in this paper are finite groups. $\mathcal{P}(X)$ will denote the power set of the set $X$.

	\section{Preliminaries}\label{sec2}
	In this section, we will recall some of the definitions and results that we will require later.
	\begin{definition}
		An \textit{Impartial-game} is a combinatorial game in which the moves available for a given position do not depend on whose turn it is $i.e.$ a game with a finite set $X$ of positions along with a starting position and a collection $\{Opt(P)\subseteq X \mid P\in X\}$ of option sets is said to be impartial if for a given position $P$  both the players have same option set $Opt(P)$. A position where the game ends is called a terminal position.  \end{definition}
	
	One such game is \textit{game of nim} which is played with $k$ heaps of stones. To play this game players alternatively take one or more stones from single heap, the player who takes last stone wins or loses the game. 
	
	\begin{definition} The minimum excludant, $mex(A)$ of a set $A$ of ordinals is the smallest ordinal not contained in the set $A$. 
	\end{definition}
	The nim-value, nim($P$) of a position $P$ is the minimum excludant of the set of nim-values of the options of $P$ that is, \[nim(P)=mex(\{nim(Q)\mid Q\in Opt(P)\}).\]
	
	\begin{theorem}(Sprague-Grundy Theorem) \cite[Theorem 1.3, p.180]{3} Every short impartial game is equal to a nim-heap.  \end{theorem}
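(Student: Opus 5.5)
The plan is to prove, by strong induction on the birthday (game-tree height), that a short impartial game $G$ is equal to the nim-heap $*n$ with $n = nim(G)$. The nim-value is the one defined in the paper via $mex$. Here ``equal'' means that $G + H$ and $*n + H$ have the same outcome for every impartial game $H$. Equivalently, $G + *n$ is a second-player win, i.e.\ a $\mathcal{P}$-position.

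First I would record two standard facts, each proved by induction on the combined height of the positions involved.
\begin{enumerate}
\item A sum $G+H$ is a $\mathcal{P}$-position iff $G=H$.
\item Equality is a congruence: replacing options of a game by equal games does not change its value.
\end{enumerate}
With these, it suffices to show the following. If every option $G'$ of $G$ already equals $*m_{G'}$ (induction hypothesis, since options have smaller birthday), and $n = mex\{m_{G'}\}$, then $G + *n$ is a $\mathcal{P}$-position. By the congruence fact, $G$ may be assumed to be the game whose options are the heaps $*m_{G'}$.

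The key step is the strategy-stealing/mirroring argument showing $G + *n \in \mathcal{P}$. We must exhibit a winning reply to every first-player move.
\begin{itemize}
\item If the first player moves in $*n$ to $*k$ with $k<n$, then $k$ is some $m_{G'}$ by the definition of $mex$. The second player moves $G$ to that option $*k$, leaving $*k+*k$, which is a $\mathcal{P}$-position by copying moves.
\item If the first player moves $G$ to an option $*m$, then $m\neq n$, again by definition of $mex$.
  \begin{itemize}
  \item If $m>n$, the second player reduces $*m$ to $*n$.
  \item If $m<n$, the second player reduces $*n$ to $*m$.
  \end{itemize}
  Either way the result is $*k+*k$, a $\mathcal{P}$-position.
\end{itemize}
Termination of play is guaranteed because the game is short, so all positions are finite and acyclic. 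The base case is the terminal game, with no options, which equals $*0$.

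The main obstacle is less the mirroring itself than setting up the equality notion cleanly. Specifically, one must verify that $*k+*k\in\mathcal{P}$ and that substituting equal options preserves equality, so that the induction hypothesis can legitimately replace each option $G'$ by $*m_{G'}$. I would handle this by proving the characterization ``$G=H$ iff $G+H\in\mathcal{P}$'' first. That in turn uses the fact that $X\in\mathcal{P}$ implies $Y+X$ has the same outcome as $Y$, which follows by having the winning player answer moves in $X$ within $X$. After that, the main induction goes through directly. Alternatively, since the paper only cites this theorem from \cite{3}, it suffices to invoke that reference.
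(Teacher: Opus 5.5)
Your proof is correct. It is the standard argument from the reference \cite{3} that the paper cites; the paper gives no proof of its own. You use induction on birthday, replace each option by the nim-heap it equals using the congruence lemma, and then show $G+*n$ is a $\mathcal{P}$-position by the mex case analysis, always replying so as to leave $*k+*k$. The only correction is terminological: the key step is a mirroring argument, not strategy stealing.
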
 
	
	The game is an \textit{N-position} if the next player wins and it is a \textit{P-position} if the previous player wins.
	
	\begin{proposition} \cite[Theorem 1.12, p. 56]{3} For a game $G$, a position $Q$ is a \emph{P-position} if and only if $nim(Q)=*0$. \end{proposition}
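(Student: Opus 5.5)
We prove that a position $Q$ is a P-position exactly when $nim(Q)=*0$, by strong induction on the game digraph. Since the game is short, every play from $Q$ reaches a terminal position in finitely many moves. So the \emph{birthday} (height) of $Q$ is well defined: it is the length of the longest play starting at $Q$. We induct on this height and use only the definition $nim(Q)=mex(\{nim(R)\mid R\in Opt(Q)\})$ together with the normal play convention. Under that convention, the player who has no move at a terminal position loses; this is the convention relevant to $GEN$ and $DNG$ once they are encoded so that the last mover wins.

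For the base case, let $Q$ be terminal. Then $Opt(Q)=\emptyset$, so $nim(Q)=mex(\emptyset)=0$. The next player has no move and loses, so the previous player wins and $Q$ is a P-position.

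For the inductive step, assume the equivalence holds for every option of $Q$; each option has strictly smaller height. Recall that $Q$ is a P-position exactly when every option of $Q$ is an N-position. Similarly, $Q$ is an N-position exactly when some option of $Q$ is a P-position, because the next player moves there and becomes the ``previous player'' of a P-position. We now check both directions.

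If $nim(Q)=0$, then $0$ is not the nim-value of any option. So every option $R$ has $nim(R)\neq 0$, and by induction every such $R$ is an N-position. Whatever the next player does, the opponent then wins, so $Q$ is a P-position.

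Conversely, if $nim(Q)\neq 0$, then by the definition of $mex$ the value $0$ does lie in $\{nim(R)\mid R\in Opt(Q)\}$. So some option $R$ has $nim(R)=0$, and by induction $R$ is a P-position. Moving to $R$ wins for the next player, so $Q$ is an N-position and not a P-position.

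The argument presupposes that every position is exactly one of N or P. This dichotomy follows from the same induction: at each position, either some option is P or all options are N.

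The only real care point is justifying that the induction is well founded. Here finiteness of the group is what matters: each move selects a new, previously unselected element, so the game digraph is acyclic and finite. Alternatively one may cite the Sprague--Grundy Theorem above. $Q$ is equal to the nim-heap $*nim(Q)$, and among nim-heaps only $*0$ is a second-player win, since from $*n$ with $n>0$ the first player moves to $*0$. This gives the result directly as well.
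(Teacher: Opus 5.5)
Your proof is correct. The induction on the height of the finite acyclic game digraph, with $mex(\emptyset)=0$ at terminal positions and the observation that $nim(Q)\neq 0$ forces an option of nim-value $0$, is the standard argument for this fact. The paper gives no proof of its own and only quotes the result from the cited textbook, so there is no different route to compare against.
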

	
	A subset $S\subseteq G$ is a \textit{generating set} of the group $G$, if the subgroup $\langle S \rangle$ generated by the set $S$ is the group $G$ itself.
	\begin{definition}
		A subgroup $H$ of a group $G$ is called \textit{maximal subgroup} if $\langle H \cup\{x\} \rangle = G$ for any $x\in G\setminus H$. Intersection of all maximal subgroups of $G$ is called the  \textit{Frattini subgroup} of $G$ and is denoted by $\Phi(G)$. \end{definition}
	
	\begin{proposition} Any non-trivial finite group has at least one maximal subgroup.   \end{proposition}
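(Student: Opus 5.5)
The plan is to use finiteness of $G$ to pick a proper subgroup of largest possible order, and then check that it is maximal in the sense of the definition above. Let $G$ be a non-trivial finite group. Consider the collection $\mathcal{S}$ of all proper subgroups of $G$, i.e.\ subgroups $H \le G$ with $H \neq G$. This collection is non-empty: since $G$ is non-trivial, the trivial subgroup $\{e\}$ is different from $G$ and so lies in $\mathcal{S}$. This is the only point where the hypothesis that $G$ is non-trivial enters. For the trivial group, $\mathcal{S}$ is empty and the statement fails.

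Since $G$ is finite, $\mathcal{S}$ is a finite non-empty set. So we can choose $M \in \mathcal{S}$ whose order $|M|$ is largest among all elements of $\mathcal{S}$. We claim that $M$ is a maximal subgroup.

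Take any $x \in G \setminus M$, and put $K = \langle M \cup \{x\} \rangle$. Then $K$ is a subgroup of $G$ that contains $M$. It also contains $x \notin M$, so $M \subsetneq K$ and $|K| > |M|$. If $K$ were a proper subgroup, it would lie in $\mathcal{S}$ and have order larger than $|M|$. That contradicts the choice of $M$, so $K = G$. This is exactly the condition $\langle M \cup \{x\} \rangle = G$ for all $x \in G \setminus M$ required in the definition, so $M$ is maximal.

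There is no serious obstacle here. The only points needing care are that $\mathcal{S}$ is non-empty, which uses non-triviality, and that a maximum exists, which uses finiteness. If one wanted a proper subgroup containing a given proper subgroup $H$, the same argument would work with $\mathcal{S}$ replaced by the proper subgroups containing $H$.
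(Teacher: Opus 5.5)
Your argument is correct. A proper subgroup of largest order is maximal, since adjoining any outside element gives a strictly larger subgroup, which therefore must be $G$; this is the standard proof. The paper gives no proof of this proposition and simply cites it as a well-known fact. Your $M$ is also explicitly proper, as it must be: the paper's definition of a maximal subgroup omits the condition $H \neq G$, but its later use of maximal subgroups (for example, the characterization of generating sets) presupposes it.
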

	
	\begin{proposition}\cite[Proposition 2.2, p. 512]{1}
		A subset $S$ of a finite group $G$ is a generating set if and only if $S$ is not contained in any maximal subgroup of $G$.
	\end{proposition}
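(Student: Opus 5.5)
We prove the two implications separately, using the paper's definition of a maximal subgroup ($H$ proper and $\langle H\cup\{x\}\rangle=G$ for every $x\in G\setminus H$) and the finiteness of $G$.

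\emph{A generating set lies in no maximal subgroup.} Suppose $S$ generates $G$ and $S\subseteq M$ for some maximal subgroup $M$. Since $M$ is a subgroup containing $S$, we get $G=\langle S\rangle\subseteq M$. This contradicts the fact that a maximal subgroup is proper.

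\emph{A set lying in no maximal subgroup generates $G$.} We prove the contrapositive. Suppose $S$ does not generate $G$, so $H=\langle S\rangle$ is a proper subgroup of $G$. We find a maximal subgroup containing $H$ as follows.

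1. Consider the family $\mathcal{F}$ of all proper subgroups of $G$ that contain $H$. It is nonempty, because $H\in\mathcal{F}$.
2. Since $G$ is finite, $\mathcal{F}$ is finite. Choose $M\in\mathcal{F}$ of largest order.
3. We claim $M$ is maximal. Take $x\in G\setminus M$. Then $\langle M\cup\{x\}\rangle$ contains $H$ and is strictly larger than $M$.
4. By the choice of $M$, this larger subgroup cannot lie in $\mathcal{F}$. Since it contains $H$, the only way to fail is to be non-proper, so $\langle M\cup\{x\}\rangle=G$.

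Hence $S\subseteq H\subseteq M$ with $M$ maximal, which proves the contrapositive.

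The only point needing care is step 2, the existence of a maximal element above $H$. Finiteness of $G$ settles it here; no appeal to Zorn's lemma is needed. Note that the earlier proposition, that every non-trivial finite group has a maximal subgroup, is the special case $H=\{e\}$ of this argument.

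The case $G$ trivial is consistent with the statement. Every subset of $G$ generates $G$, and $G$ has no maximal subgroups, so the condition "contained in no maximal subgroup" holds vacuously for every $S$. The empty set also fits: $\langle\emptyset\rangle$ is the trivial subgroup, and if $G$ is non-trivial, $\emptyset$ is contained in some maximal subgroup, which exists by the earlier proposition.
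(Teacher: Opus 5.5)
Your proof is correct, and it is the standard argument for this fact; the paper gives no proof of its own and simply cites Ernst and Sieben. The argument: any subgroup containing $S$ contains $\langle S\rangle$, and by finiteness a proper $\langle S\rangle$ lies in a proper subgroup of largest order containing it, which is maximal. You were also right to build properness into the definition of a maximal subgroup, since the paper's Definition omits it and without it $G$ itself would vacuously qualify, so the forward direction would fail.
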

	
	\subsection{Impartial Games on Finite Groups}\label{sub1}
	In this section, we study two impartial games on finite groups.
	
	\begin{definition} An \textit{Achievement game} is a two-player game played on a group $G$ with the following rules:
		\begin{itemize}
			\item[$(i)$] first player chooses an element $x_1\in G$;
			\item[$(ii)$] at the $r^{th}$ turn, concerned player chooses  $x_r\in G\setminus\{x_1,x_{2}, \cdots, x_{r-1}\}$.
		\end{itemize} 
		The game ends right after the $n^{th}$ turn if $n$ is the
		smallest positive integer such that a player chooses an element $x_{n} \in G\setminus \{x_1, x_2,\cdots, x_{n-1}\}$ and the set $\{x_1, x_{2}, \cdots, x_n\}$ generates the group $G$. The player with the last move wins the game. The game is denoted by $GEN(G)$. A position in $GEN(G)$ is the set of jointly chosen elements $\{x_1, x_2,\cdots, x_r\}$. 
	\end{definition}
	
	In the group $\mathbb{Z}_5= \{0,1,2,3,4\}$, if the first player selects any element from set \{1,2,3,4\}, then the player wins the game and if the first player selects 0, then the second player wins the game.
	
	\begin{definition}An \textit{Avoidance game} is a two-player game played on a group $G$ with the following rules:
		\begin{itemize}
			\item[$(i)$] first player chooses an element $x_1\in G$ such that $\langle x_1\rangle\neq G$;
			\item[$(ii)$] at the $r^{th}$ turn, concerned player chooses  $x_r\in G\setminus\{x_1,x_{2}, \cdots, x_{r-1}\}$ such that $\langle x_1,...,x_{r} \rangle\neq G$.
		\end{itemize} 
		The game ends right after the $n^{th}$ turn if $n$ is the
		smallest positive integer such that a player is not able to choose an element $x_{n} \in G\setminus \{x_1, x_2, \cdots, x_{n-1}\}$ and the set $\{x_1, x_{2}, \cdots, x_n\}$ do not generate the group $G$ but the set $\{x_1,x_2, \cdots,x_n\} \cup \{x\}$ generates the group $G$ for any $x\in G\setminus \{x_1, x_2, \cdots,x_n\}$. The player with the last move wins the game. The game is denoted by $DGN(G)$. A position in $DGN(G)$ is the set of jointly chosen elements $\{x_1, x_2,\cdots, x_r\}$ which must not generate the group $G$. 
	\end{definition}
	
	In the group $\mathbb{Z}_7 = \{0,1,2,3,4,5,6\}$, if the first player selects any element from $\{1,2,3,4,5,6\}$, then the player loses the game and if the first player selects $0$, then the second player loses the game.
	
	\begin{definition}
		Let $P$ and $Q$ be two non-empty subsets of a group $G$. Then $P$ and $Q$ are said to be \textit{automophism equivalent} if there is an automophism $\phi$ of the group $G$ such that $\phi(P)=Q$.
	\end{definition}
	\begin{definition}\cite[p. 513]{1}
		Let $\Gamma$ be a game. A \textit{game digraph} is a diagrammatic representation of the game. For each position $P$, the $Opt(P)$ can be partitioned into automorphism equivalence classes. By removing all but one representative from each of the class, the obtained game-digraph is called \textit{representative game digraph}. \end{definition}
	
	Note that a group automorphism induces an automorphism of the game digraph. Therefore, if $P$ and $Q$ are two \emph{automorphism equivalent} positions of game $GEN(G)$ or $DNG(G)$, then $nim(P) = nim(Q)$. Also, an $r$th position $P = \{x_{1}, x_{2}, \cdots, x_{r}\}$ is called an even position(odd position) accordingly as $r$ is even(odd).
	\subsection{Avoidance game}\label{sub2}
	To study the the avoidance game on a finite group $G$, we first discuss the \emph{nim-value} of each position of the game.
	
	\begin{theorem} \cite[Proposition 3.4, p.5]{1} Let $G$ be a finite group. Then the positions of $DNG(G)$ are subsets of maximal subgroups of the group $G$ and the terminal positions are maximal subgroups of the group $G$. 
	\end{theorem}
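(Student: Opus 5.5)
The plan is to use Proposition~2.2 of \cite{1}, quoted above: a subset $S$ of $G$ generates $G$ if and only if $S$ lies in no maximal subgroup of $G$. If $G$ is trivial, then $G=\langle\emptyset\rangle$ and the game has no moves. So we assume $G$ is non-trivial; by the earlier proposition, $G$ then has at least one maximal subgroup.

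\emph{Positions lie inside maximal subgroups.} Let $P=\{x_1,\dots,x_r\}$ be a position of $DNG(G)$. By the rules of the avoidance game, $\langle P\rangle\neq G$, so $P$ is not a generating set. By Proposition~2.2 of \cite{1}, $P$ is therefore contained in some maximal subgroup $M$. For the empty starting position this is trivial, since any maximal subgroup works.

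\emph{Terminal positions are maximal subgroups.} Let $P$ be terminal. Then $\langle P\rangle\neq G$, but $\langle P\cup\{x\}\rangle=G$ for every $x\in G\setminus P$. By the first step, $P\subseteq M$ for some maximal subgroup $M$. Suppose $P\neq M$ and pick $x\in M\setminus P$. Then $P\cup\{x\}\subseteq M$, so by Proposition~2.2 it does not generate $G$. This means $x$ is a legal move, contradicting terminality. Hence $P=M$, so $P$ is a maximal subgroup.

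\emph{Optional converse.} Every maximal subgroup $M$ can be reached as a position: list its elements in any order, and each partial set lies in $M\neq G$. Moreover $M$ is terminal, because for $x\notin M$ maximality gives $\langle M\cup\{x\}\rangle=G$.

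The only point needing care is the second step. There we must make sure that "terminal" means no legal move exists, rather than merely that the position is some subset. The argument also relies on finiteness, both to guarantee that maximal subgroups exist and to make Proposition~2.2 of \cite{1} applicable. Beyond that the proof is routine.
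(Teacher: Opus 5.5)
Your proof is correct. The paper gives no proof of its own here; it simply quotes \cite[Proposition 3.4]{1}. Your route is the standard direct argument behind that citation: positions are non-generating sets, so by the generating-set criterion each lies in some maximal subgroup, and a terminal position $P\subseteq M$ must equal $M$, since otherwise any $x\in M\setminus P$ would still be a legal move.

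One point deserves to be made explicit. You use that maximal subgroups are proper, for instance to conclude that $P\cup\{x\}\subseteq M$ does not generate $G$. The paper's own definition of maximal subgroup omits this condition, but properness is clearly intended.
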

	We have the set of all positions of $DNG(G)$. Now we partition this set into a class of sets to simplify our calculations.
	
	Let $\mathcal{M}$ be the set of all maximal subgroups of $G$ and $\mathcal{I}=\{\cap M\mid \emptyset\neq M\subseteq \mathcal{M}\}$ be the set of all possible intersections of maximal subgroups of $G$. The set $\mathcal{I}$ of intersection of subgroups is partially ordered by inclusion. To denote some certain subsets of $\mathcal{I}$, we use the interval notation that is, for $I \in \mathcal{I}$, $(-\infty, I) = \{J\in \mathcal{I} \mid J \subsetneq I\}$. 
	
	\begin{definition} For each $I\in \mathcal{I}$, a structure class $X_{I}$ is defined as the collection of those subsets of $I$ that are not contained in any proper subgroup of $I$ in $\mathcal{I}$, that is,
		\[X_{I} = \mathcal{P}(I)\setminus \cup{\{\mathcal{P}(J)\mid J\in (-\infty, I)\}}\].
	\end{definition}
	
	Let $\mathcal{X} = \{X_{I} \mid I\in \mathcal{I}\}$ be the collection of all structure classes. It is obvious that the Frattini subgroup $\Phi(G)$ is in \(\mathcal{I}\) and $\emptyset\in X_{\Phi(G)}$.
	
	For a structure class $X_I$, the \emph{parity}\footnote{For a set $S$,  the \emph{parity}, $pty(S)$ is defined as the parity of its order.} of the structure class is defined as the \emph{parity} of the corresponding subgroup $I$. A structure class $X_{I}$ is terminal if $I$ is the terminal position. Note that the set $\mathcal{X}$ of all structure classes forms a partition of the set of all positions of the game $DNG(G)$ (see \cite[Corollary 3.9, p. 5]{1}). Next, we see that there is a relation between the elements of given structure class.
	
	The partition $\mathcal{X}$ is compatible with the option relationship between game positions, that is, if $X_{I}, X_{J}\in \mathcal{X}$ are two distinct structure classes and $P$, $Q \in X_{I}$ are two positions, then $Opt(P )\cap X_{J} \ne \emptyset$ if and only if $Opt(Q)\cap X_{J} \ne \emptyset$ (see \cite[Corollary 3.11, p. 6]{1}). Let $X_{I}, X_{J}\in \mathcal{X}$ be two structure classes. Then, if $Opt(I)\cap X_{J} \ne \emptyset$, then $X_{J}$ is said to be an option of $X_{I}$ and is written as $X_{J}\in Opt(X_{I})$. The set $\{X_{I}\mid I\in \mathcal{I}\}$ of the structure classes is called vertex set and the set $\{(X_{I}, X_{J}) \mid X_{J}\in Opt(X_{I})\}$ is called the edge set of the digraph (called the \textit{structure digraph} of the game). 
	Ernst and Sieben \cite{1} proved that  with each structure class only two \emph{nim-values} are associated. They proved that if $P$ and $Q$ are two positions in a structure class $X_I$ such that $pty(P)=pty(Q)$, then $nim(P)=nim(Q)$ (see \cite[Proposition 3.15, pp. 6]{1}). The position(odd or even) of a structure class $X_{I}$ is defined as the position(odd or even) of the corresponding subgroup $I$ in the game digraph.

	In a \textit{structure diagram}, a structure class $X_{I}$ is represented by a triangle pointing down or up corresponding to $pty(I)$ is odd or even respectively. The triangles are divided into two parts, a smaller triangle and a trapezoidial part, where the smaller triangle represents the odd positions of $X_{I}$ and the trapezoid represents the even positions of $X_{I}$. The numbers in the smaller triangle and the trapezoid are the nim-values of these positions. Also, if $X_{J}\in Opt(X_{I})$, then there is a directed edge from $X_{I}$ to $X_{J}$.
	
	The $type$ of a structure class $X_{I}$ is the triple
	\[type(X_{I}) = \left(pty(I),nim(P),nim(Q)\right),\]
	where $P, Q \in X_{I}$ with $pty(P) = 0$ and $pty(Q) = 1$. The option type of $X_{I}$ is the set
	\[otype(X_{I}) := \{type(X_{J}) \mid X_{J} \in Opt(X_{I})\}\]
	and the full option type of $X_{I}$ is the set
	\[Otype(X_{I}) := otype(X_{I}) \cup \{type(X_{I})\}.\] 
	Two structure classes $X_{I}$ and $X_{J}$ are said to be type equivalent if $type(X_{I}) = type(X_{J})$ and $Otype(X_{I}) = Otype(X_{J})$. 
	\begin{figure}[h]
		\centering
		\begin{tikzpicture}
		\node (a) at (0,0) {$X_{I}$};
		\node (b) at (1.5,0) {\begin{tikzpicture}
			\draw[thick,-] (0,0) -- (1,0);
			\draw[thick, -] (0,0) -- (.5,1);
			\draw[thick, -] (1,0) -- (.5,1);
			\path[fill=gray!20, thick] (0,0) -- (.5,1) -- (1,0) -- cycle;
			\draw[thick, -] (0.2,.45) -- (.8,.45) node[pos = .5, above]{$b$};
			\draw[thick, -] (0.2,.45) -- (.8,.45) node[pos = .5, below]{$a$};
			\end{tikzpicture}};
		\node (c) at (3,0) {\begin{tikzpicture}
			\draw[thick,-] (0,0) -- (1,0);
			\draw[thick, -] (0,0) -- (.5,-1);
			\draw[thick, -] (1,0) -- (.5,-1);
			\path[fill=gray!20, thick] (0,0) -- (.5,-1) -- (1,0) -- cycle;
			\draw[thick, -] (0.2,-.45) -- (.8,-.45) node[pos = .5, above]{$a$};
			\draw[thick, -] (0.2,-.45) -- (.8,-.45) node[pos = .5, below]{$b$};
			\end{tikzpicture}};
		\node (d) at (0,-.95) {$type(X_{I})$};
		\node (e) at (1.5,-.95) {$(0,a,b)$};
		\node (f) at (3,-.95) {$(1,a,b)$};
	\end{tikzpicture}
	\caption{Visualization of structure classes and their corresponding types}
\end{figure}

To compute the $types$ of structure classes, we first obtain the structure digraph. Then the $types$ of the structure classes can be computed recursively from the bottom up process using the formulas $type(X_{I}) = (pty(I), a, b)$, where
\begin{center}
$A = \{a^*\mid (\theta,a^*,b^*)\in otype(X_I)\}$ and $B = \{b^*\mid (\theta,a^*,b^*)\in otype(X_I)\}$,\\
$a = mex(B), b = mex(A\cup \{a\})$, if $pty(I) = 0$,\\
$b = mex(A), a = mex(B\cup \{b\})$, if $pty(I) = 1$.
\end{center}
The nim-value of the game is same as the nim-value of the initial position $\emptyset$, which is an even subset of $\Phi(G)$. Because of this, the nim-value of the game is the second component of $type(X_{\Phi(G)})$, which corresponds to the trapezoidal part of the triangle representing the source vertex $X_{\Phi(G)}$ of the structure diagram. Note that if $X_{I}$ is terminal, then $type(X_I)$ must be either $(0, 0, 1)$ or $(1, 1, 0)$ depending on the parity of $X_I$.

A \textit{simplified structure diagram} of $DNG(G)$ is obtained from the structure diagram by identifying two structure classes that are $type$ equivalent. 

Let us determine the $type$ of structure classes in the avoidance games of the group $\mathbb{Z}_6$.
\begin{example}
Let $G= \mathbb{Z}_{6}$. Then $\mathcal{M} = \{\langle2\rangle, \langle3\rangle\}$ and $\mathcal{I}=\{\{0\}, \langle2\rangle, \langle3\rangle\}$. We will start the process from the terminal classes $X_{\langle2\rangle}$ and $X_{\langle3\rangle}$. Since, $X_{\langle2\rangle}$ is terminal and $pty(X_{\langle2\rangle}) = 1$, $type(X_{\langle2\rangle}) = (1,1,0)$. Also, $X_{\langle3\rangle}$ is terminal and $pty(X_{\langle3\rangle}) = 0$, $type(X_{\langle3\rangle}) = (0,0,1)$. For structure class $X_{\{0\}}$, we have $Opt(X_{\{0\}})=\{X_{\langle2\rangle},X_{\langle3\rangle}\}$. Therefore, $otype(X_{\{0\}}) = \{(1,1,0), (0,0,1)\}$. Here, the set $A = \{0,1\}$ and so is the set $B$. Since $pty(X_{\{0\}})=1$, $b = mex(A) = mex(\{0,1\}) = 2$ and $a = mex(B \cup \{b\}) = mex(\{0,1,2\}) = 3$. Hence the $type\left(X_{\{0\}}\right) = (1,3,2)$.
\end{example}
\begin{figure}[h]
\begin{tikzpicture}
\node (1) at (0,0) {$\emptyset_{*3}$};
\node (2) at (-2.25,-1.25) {$\{2\}_{*0}$} edge [<-] (1);
\node (3) at (-0.75,-1.25) {$\{4\}_{*0}$} edge [<-,dashed] (1);
\node (4) at (0.75,-1.25)   {$\{0\}_{*2}$} edge [<-] (1);
\node (5) at (2.25,-1.25)   {$\{3\}_{*1}$} edge [<-] (1);
\node (6) at (-2.25,-2.5) {$\{2,4\}_{*1}$} edge[<-] (3) edge[<-] (2) ;
\node (7) at (-0.75,-2.5) {$\{2,0\}_{*1}$} edge [<-] (2) edge[<-] (4);
\node (8) at (0.75,-2.5)   {$\{0,4\}_{*1}$} edge [<-,dashed] (3)  edge [<-] (4);
\node (9) at (2,-3.75) {$\{0,3\}_{*0}$} edge[<-] (4) edge[<-] (5);
\node(10) at (-1,-3.75) {$\{0,2,4\} _{*0}$} edge[<-](6) edge[<-](7) edge[<-](8);

\node (11) at (7,0) {$\emptyset_{*3}$};
\node (12) at (5,-1.25) {$\{2\}_{*0}$} edge [<-] (11);
\node (14) at (7,-1.25)   {$\{0\}_{*2}$} edge [<-] (11);
\node (15) at (9,-1.25)   {$\{3\}_{*1}$} edge [<-] (11);
\node (16) at (5,-2.5) {$\{2,4\}_{*1}$}  edge[<-] (12) ;
\node (17) at (7,-2.5) {$\{2,0\}_{*1}$} edge [<-] (12) edge[<-] (14);
\node (19) at (8.5,-3.75) {$\{0,3\}_{*0}$} edge[<-] (14) edge[<-] (15);
\node(20) at (6,-3.75) {$\{0,2,4\} _{*0}$} edge[<-](16) edge[<-](17);
\end{tikzpicture}
\caption{Game digraph and representative game digraph for $DNG(\mathbb{Z}_6)$}

\begin{tikzpicture}
\node (1) at (0,0) {$\underset{(1,3,2)}{X_{\Phi(\mathbb{Z}_6)}}$};
\node (2) at (-1.5,-1.5) {$\underset{(1,1,0)}{X_{\langle 2\rangle}}$} edge [<-] (1);
\node (3) at (1.5,-1.5)  {$\underset{(0,0,1)}{X_{\langle 3\rangle}}$}  edge [<-] (1);

\node (a) at (7.5,0)	{\begin{tikzpicture}
	\draw[thick,-] (0,0) -- (1,0);
	\draw[thick, -] (0,0) -- (.5,-1);
	\draw[thick, -] (1,0) -- (.5,-1);
	\path[fill=gray!20, thick] (0,0) -- (.5,-1) -- (1,0) -- cycle;
	\draw[thick, -] (0.2,-.45) -- (.8,-.45) node[pos = .5, above]{$3$};
	\draw[thick, -] (0.2,-.45) -- (.8,-.45) node[pos = .5, below]{$2$};
	\end{tikzpicture}};
\node (b) at (9,-1.5) {\begin{tikzpicture}
	\draw[thick,-] (0,0) -- (1,0);
	\draw[thick, -] (0,0) -- (.5,1);
	\draw[thick, -] (1,0) -- (.5,1);
	\path[fill=gray!20, thick] (0,0) -- (.5,1) -- (1,0) -- cycle;
	\draw[thick, -] (0.2,.45) -- (.8,.45) node[pos = .5, above]{$1$};
	\draw[thick, -] (0.2,.45) -- (.8,.45) node[pos = .5, below]{$0$};
	\end{tikzpicture}};
\node (c) at (6,-1.5)	{\begin{tikzpicture}
	\draw[thick,-] (0,0) -- (1,0);
	\draw[thick, -] (0,0) -- (.5,-1);
	\draw[thick, -] (1,0) -- (.5,-1);
	\path[fill=gray!20, thick] (0,0) -- (.5,-1) -- (1,0) -- cycle;
	\draw[thick, -] (0.2,-.45) -- (.8,-.45) node[pos = .5, above]{${1}$};
	\draw[thick, -] (0.2,-.45) -- (.8,-.45) node[pos = .5, below]{$0$};
	\end{tikzpicture}};
\draw[->] (a) -- (b);
\draw[->] (a) -- (c);
\end{tikzpicture}
\caption{Structure digraph and structure diagram for $DNG(\mathbb{Z}_6)$ with \emph{type} of each structure class.}
\end{figure}

\begin{proposition}  \cite[Proposition 3.20, p. 10]{1} For game $DNG(G)$, \emph{type} of a structure class lies in the set $\{(0,0,1),(1,0,1), (1,1,0),(1,3,2)\}$.  \end{proposition}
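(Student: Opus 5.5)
We argue by strong induction on the structure digraph, processing structure classes from the terminal ones upward. For a class $X_I$, every option $X_J$ has $J\supsetneq I$, so the types of all options are known before $X_I$ is handled. The claim to prove is that each type lies in $T=\{(0,0,1),(1,0,1),(1,1,0),(1,3,2)\}$.

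\emph{Base case.} If $X_I$ is terminal, then $type(X_I)$ is $(0,0,1)$ or $(1,1,0)$, as already noted after the recursive formulas. Both lie in $T$.

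\emph{Key structural fact for even classes.} Suppose $pty(I)=0$ and $X_I$ is not terminal. Since $|I|$ is even, $I$ contains an involution $t$. We want every option $X_J$ of $X_I$ to have $pty(J)=0$. Every $J\in\mathcal I$ with $J\supseteq I$ contains $I$, so $|I|$ divides $|J|$ by Lagrange, and hence $|J|$ is even. Thus the odd-parity types $(1,\ast,\ast)$ never occur in $otype(X_I)$. By induction every option then has type $(0,0,1)$, so $A=\{0\}$ and $B=\{1\}$. The formulas give $a=mex(B)=0$ and $b=mex(A\cup\{0\})=1$, so $type(X_I)=(0,0,1)$. (If $X_I$ had no options it would be terminal, already handled.)

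\emph{Odd classes.} Suppose $pty(I)=1$. By induction, $otype(X_I)\subseteq T$, so $otype(X_I)$ is a nonempty subset of $T$. Here $b=mex(A)$ and $a=mex(B\cup\{b\})$, where $A$ collects second coordinates and $B$ third coordinates. The second coordinates in $T$ are $0,0,1,3$ and the third coordinates are $1,1,0,2$.

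The plan is a finite case check on which of $0$ and $1$ occur in $A$:
\begin{itemize}
\item If $0\notin A$, then $b=0$. Every option is $(1,1,0)$ or $(1,3,2)$, so $0\in B$, and $a=mex(B\cup\{0\})$ is $1$ when $1\notin B$. We must rule out $1\in B$. That would need an option of type $(0,0,1)$ or $(1,0,1)$, whose second coordinate $0$ would put $0\in A$, a contradiction. So $type=(1,1,0)$.
\item If $0\in A$ and $1\notin A$, then $b=1$. Every option is $(0,0,1)$, $(1,0,1)$ or $(1,3,2)$, so $1\in B$ and $0\notin B$. Hence $a=0$ and $type=(1,0,1)$.
\item If $0,1\in A$, then $b=2$ whenever $2\notin A$, which always holds since no second coordinate in $T$ equals $2$. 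Here $1\in A$ forces an option of type $(1,1,0)$, so $0\in B$, and $0\in A$ forces $1\in B$. Hence $a=mex(B\cup\{2\})=3$ and $type=(1,3,2)$.
\end{itemize}
Since $1\in A$ only arises from $(1,1,0)$, the three cases are exhaustive.

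\emph{Main obstacle.} The delicate step is the even case: the closure of $T$ depends on even classes having only even options. The Lagrange argument settles it, because the inclusion $I\subseteq J$ together with the fact that $I$ is a subgroup forces $|I|$ to divide $|J|$. Without this, an even class with an odd option would produce types outside $T$. The recursion itself is the one stated before the example, and nothing else about $G$ is used.
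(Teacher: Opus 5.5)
Your proof is correct and follows essentially the same route as the argument of Ernst and Sieben that the paper cites without reproducing: structural induction from the terminal classes, the Lagrange observation that $I\subseteq J$ forces even classes to have only even options and hence type $(0,0,1)$, and a finite check of the odd case. There is one harmless slip in your first odd case: $0\in B$ does not follow when every option has type $(1,3,2)$ (then $B=\{2\}$), but you never need it, because $b=0$ already lies in $B\cup\{b\}$ and $1\notin B$ then gives $a=1$.
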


\begin{corollary}  \cite[proposition 3.21, p. 10]{1} For game $DNG(G)$, possible \emph{nim-values} are $*0,*1$, or $*3$. \end{corollary}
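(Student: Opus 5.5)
The corollary should follow from the preceding Proposition (Proposition 3.20 of \cite{1}) together with the description of which position represents the whole game. I will argue in two steps. First, the nim-value of $DNG(G)$ equals the nim-value of the starting position $\emptyset$, so the value of the game is determined by the structure class containing $\emptyset$. Second, I identify that class and read off the admissible values from the list of possible types.

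For the first step, recall that the Frattini subgroup $\Phi(G)$ lies in $\mathcal{I}$ and that $\emptyset \in X_{\Phi(G)}$. Since $|\emptyset| = 0$ is even, the result of Ernst and Sieben that positions of equal parity in a structure class share a nim-value gives $nim(\emptyset) = a$, where
\[
type(X_{\Phi(G)}) = (pty(\Phi(G)), a, b).
\]
Here $a$ is the second component, which corresponds to the trapezoidal part of the source vertex, as already remarked before the example on $\mathbb{Z}_6$.

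For the second step, the preceding Proposition says that $type(X_{\Phi(G)})$ is one of $(0,0,1)$, $(1,0,1)$, $(1,1,0)$ or $(1,3,2)$. The second components of these types are $0$, $0$, $1$ and $3$ respectively. Hence $nim(\emptyset) \in \{0,1,3\}$, so the nim-value of $DNG(G)$ is $*0$, $*1$ or $*3$.

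There is no real obstacle here. The only point to check is that the game value is exactly the even-position component of the class containing $\emptyset$, and not the odd one. This holds because the game starts from $\emptyset$, which has even cardinality, and the paper has already identified the nim-value of the game with the second component of $type(X_{\Phi(G)})$.

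Optionally, one can note that each of these values actually occurs. For example, the example above shows $DNG(\mathbb{Z}_6)$ has value $*3$. This is not needed for the statement, which only asserts that the values are among $*0$, $*1$, $*3$.
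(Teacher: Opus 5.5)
Your argument is correct and essentially the same as the one behind the cited result. The value of $DNG(G)$ is the nim-value of the even position $\emptyset\in X_{\Phi(G)}$, i.e.\ the second component of $type(X_{\Phi(G)})$, and the preceding proposition forces this component to be $0$, $0$, $1$ or $3$. As in the paper, you leave implicit that $G$ must be nontrivial, so that $\Phi(G)\in\mathcal{I}$ and the avoidance game is defined at all.
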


\begin{proposition} \cite[Proposition 3.22, p. 11]{1} If $G$ is non-trivial group of odd order, then $DNG(G)=*1$.  
\end{proposition}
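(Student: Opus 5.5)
The plan is to compute $type(X_{\Phi(G)})$ bottom-up through the structure digraph using the recursive formulas, and to show that every structure class of an odd-order group has the same type, namely $(1,1,0)$. Since $|G|$ is odd, Lagrange's theorem makes every subgroup of $G$ of odd order. In particular every $I\in\mathcal{I}$ has $pty(I)=1$. Moreover $G$ is non-trivial, so it has at least one maximal subgroup, and $\mathcal{I}$ is non-empty with $\Phi(G)\in\mathcal{I}$.

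I would argue by induction on the structure digraph, starting from the terminal classes. A terminal class $X_M$, with $M$ a maximal subgroup, has odd parity, so as noted earlier $type(X_M)=(1,1,0)$. Now let $X_I$ be non-terminal and assume every option $X_J$ of $X_I$ has type $(1,1,0)$. Then $A=\{1\}$ and $B=\{0\}$. Since $pty(I)=1$, the formulas give
\[
b=mex(A)=mex(\{1\})=0 \quad\text{and}\quad a=mex(B\cup\{b\})=mex(\{0\})=1.
\]
So $type(X_I)=(1,1,0)$ as well.

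The one point to check is that $otype(X_I)$ is non-empty, so that $A$ and $B$ are really $\{1\}$ and $\{0\}$ rather than empty. This holds because a non-terminal class has at least one option: a position in $X_I$ with $I$ not maximal can be extended by an element of some maximal subgroup containing $I$ without generating $G$. The compatibility of the partition $\mathcal{X}$ with options then shows this extension lies in some structure class $X_J\in Opt(X_I)$.

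Applying the induction to the source vertex gives $type(X_{\Phi(G)})=(1,1,0)$. The nim-value of the game is the second component, the value of the even position $\emptyset$, so $DNG(G)=*1$. The main (mild) obstacle is justifying that the recursion is well founded and that every non-terminal class has an option. I would handle this by inducting on $|M|-|I|$ over the finite poset $\mathcal{I}$, where $M$ ranges over the maximal subgroups containing $I$.
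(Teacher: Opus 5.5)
Your proof is correct and is essentially the standard argument of Ernst and Sieben \cite{1}, which this paper cites without reproducing: every class has odd parity, terminal classes have type $(1,1,0)$, and $otype(X_I)=\{(1,1,0)\}$ forces $type(X_I)=(1,1,0)$, so the even position $\emptyset\in X_{\Phi(G)}$ has nim-value $1$. Two small clean-ups would tighten it. First, $X_J\in Opt(X_I)$ forces $I\subsetneq J$, since a position of $X_I$ contained in $J$ lies in $I\cap J\in\mathcal{I}$; so you can simply induct downward on $|I|$ instead of on your ill-defined ``$|M|-|I|$''. Second, non-emptiness of $Opt(X_I)$ is seen by adding to the position $I$ itself an element of $M\setminus I$, where $M\supsetneq I$ is maximal.
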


\begin{proposition} \cite[Proposition 3.23, p. 11]{1} If the Frattini subgroup of a non-trivial group $G$ is of even order, then $DNG(G)=*0$.  
\end{proposition}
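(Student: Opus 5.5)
The idea is to show that every structure class has type $(0,0,1)$, and then read off the nim-value of the game from $X_{\Phi(G)}$. Assume $\Phi(G)$ has even order. Every $I \in \mathcal{I}$ is an intersection of maximal subgroups, so it contains $\Phi(G)$. Hence $|\Phi(G)|$ divides $|I|$ by Lagrange's theorem, and $|I|$ is even. So every structure class $X_I$ has $pty(I)=0$.

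We now argue by induction from the bottom of the structure digraph, i.e.\ on the poset $\mathcal{I}$ ordered by reverse inclusion (options of $X_I$ are classes $X_J$ with $J \supsetneq I$). The claim is that every structure class has type $(0,0,1)$.

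\emph{Base case.} A terminal class $X_I$, with $I$ maximal, has type $(0,0,1)$ because its parity is even; this was noted after the type formulas.

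\emph{Inductive step.} Let $X_I$ be non-terminal, and assume every option of $X_I$ has type $(0,0,1)$. Then $otype(X_I)=\{(0,0,1)\}$, so $A=\{0\}$ and $B=\{1\}$. Since $pty(I)=0$, the formulas give
\[a = mex(B) = mex(\{1\}) = 0, \qquad b = mex(A\cup\{a\}) = mex(\{0\}) = 1.\]
Thus $type(X_I)=(0,0,1)$. For the option set to be nonempty we need that a non-terminal class has at least one option. This holds because any non-terminal position can be extended to a proper subset of a maximal subgroup containing it, and the structure classes partition all positions. This step is the only place needing care, and it only uses that the structure digraph is finite and acyclic, so the recursion is well defined.

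\emph{Conclusion.} The game value is the second component of $type(X_{\Phi(G)})=(0,0,1)$, since $\emptyset$ is an even position in $X_{\Phi(G)}$. Therefore $DNG(G)=*0$.

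One could also avoid the type machinery: all positions lie in subgroups of even order, the terminal positions are maximal subgroups of even order, and a pairing argument gives the second player the win. The structure-diagram computation above is the cleaner route.
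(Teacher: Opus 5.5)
Your proof is correct and follows the standard argument behind this cited result (the paper gives no proof of its own, only the reference to Ernst--Sieben): every $I\in\mathcal{I}$ contains $\Phi(G)$, so all structure classes have parity $0$ and therefore type $(0,0,1)$. You verify the type by explicit induction, where one could instead note that $(0,0,1)$ is the only parity-$0$ type in the cited Proposition 3.20 list. Two small remarks. First, the nonemptiness of option sets that you single out as needing care is irrelevant, since $otype(X_I)=\emptyset$ also gives $a=mex(\emptyset)=0$ and $b=mex(\{0\})=1$. Second, your closing alternative needs no pairing strategy: every play ends at a maximal subgroup $M$ after exactly $|M|$ moves, which is even, so the second player wins regardless of how either side plays.
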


\subsection{Achievement game}\label{sub3}
In this section, we study the achievement game on a finite group $G$. For this, an additional structure class $X_G$ is included which contains the terminal positions as those subsets $S$ of $G$ such that $S$ generates $G$ while $S\setminus \{s\}$ does not, for some $s \in S$. Note that this is a slightly abusive notation because $X_G$ does not always contain $G$. For nontrivial groups, the positions of $GEN(G)$ are the positions of $DNG(G)$ together with the elements of $X_G$. If $G$ is the trivial group, then $\Phi(G) = G$ is not a game position of $GEN(G)$ and $X_G = \{\emptyset\}$ is the only structure class. The following is immediate.

The set $\mathcal{Y}=\mathcal{X}\cup\{X_G\}$ forms a partition of game positions of the game $GEN(G)$. Likewise in the game $DNG(G)$, the partition $\mathcal{Y}$ is also compatible with the option relationship between game positions (see \cite[Corollary 4.3, p. 11]{1}). Ernst and Sieben \cite{1} also proved that with each structure class only two \textit{nim-values} are associated.

For the game $GEN(G)$, given a structure class $X_{I}$, $type(X_{I})$, $otype(X_{I})$, $Otype(X_{I})$ and structure digraph are defined same as the game $DNG(G)$ along with the $type$ of terminal structure class $X_G$ is defined to be $(pty(G),0,0)$.

\begin{definition} Let $X_I\in\mathcal{Y}$ be a structure class in $GEN(G)$. Then $X_I$ is called a
\begin{itemize}
\item[$(i)$] terminal structure class if it consists of terminal positions,
\item[$(ii)$] semi-terminal structure class if terminal structure class is an option of $X_I$,
\item[$(iii)$] non-terminal structure class if $X_I$ is neither terminal nor semi-terminal.
\end{itemize}
\end{definition}

\begin{figure}[h]
\begin{tikzpicture}
\node (1) at (1,0) {$\underset{(1,4,3)}{X_{\Phi(\mathbb{Z}_6)}}$};
\node (2) at (-0.5,-1.5) {$\underset{(1,2,1)}{X_{\langle 2\rangle}}$} edge [<-] (1);
\node (3) at (2.5,-1.5)  {$\underset{(0,1,2)}{X_{\langle 3\rangle}}$}  edge [<-] (1);
\node (4) at (1,-3)  {$\underset{(0,0,0)}{X_{\mathbb{Z}_6}}$}  edge [<-] (1)    edge [<-] (2)   edge [<-] (3);

\node (a) at (8,0)	{\begin{tikzpicture}
\draw[thick,-] (0,0) -- (1,0);
\draw[thick, -] (0,0) -- (.5,-1);
\draw[thick, -] (1,0) -- (.5,-1);
\path[fill=gray!20, thick] (0,0) -- (.5,-1) -- (1,0) -- cycle;
\draw[thick, -] (0.2,-.45) -- (.8,-.45) node[pos = .5, above]{$4$};
\draw[thick, -] (0.2,-.45) -- (.8,-.45) node[pos = .5, below]{$3$};
\end{tikzpicture}};
\node (b) at (9.5,-1.5) {\begin{tikzpicture}
\draw[thick,-] (0,0) -- (1,0);
\draw[thick, -] (0,0) -- (.5,1);
\draw[thick, -] (1,0) -- (.5,1);
\path[fill=gray!20, thick] (0,0) -- (.5,1) -- (1,0) -- cycle;
\draw[thick, -] (0.2,.45) -- (.8,.45) node[pos = .5, above]{$2$};
\draw[thick, -] (0.2,.45) -- (.8,.45) node[pos = .5, below]{$1$};
\end{tikzpicture}};

\node (c) at (6.5,-1.5)	{\begin{tikzpicture}
\draw[thick,-] (0,0) -- (1,0);
\draw[thick, -] (0,0) -- (.5,-1);
\draw[thick, -] (1,0) -- (.5,-1);
\path[fill=gray!20, thick] (0,0) -- (.5,-1) -- (1,0) -- cycle;
\draw[thick, -] (0.2,-.45) -- (.8,-.45) node[pos = .5, above]{${2}$};
\draw[thick, -] (0.2,-.45) -- (.8,-.45) node[pos = .5, below]{$1$};
\end{tikzpicture}};

\node (d) at (8,-3) {\begin{tikzpicture}
\draw[thick,-] (0,0) -- (1,0);
\draw[thick, -] (0,0) -- (.5,1);
\draw[thick, -] (1,0) -- (.5,1);
\path[fill=gray!20, thick] (0,0) -- (.5,1) -- (1,0) -- cycle;
\draw[thick, -] (0.2,.45) -- (.8,.45) node[pos = .5, above]{$0$};
\draw[thick, -] (0.2,.45) -- (.8,.45) node[pos = .5, below]{$0$};
\end{tikzpicture}};
\draw[->] (a) -- (b);
\draw[->] (a) -- (c);
\draw[->] (a) -- (d);
\draw[->] (b) -- (d);
\draw[->] (c) -- (d);
\end{tikzpicture}
\caption{Structure digraph and structure diagram for $GEN(\mathbb{Z}_6)$}
\end{figure} 

Note that, a non-terminal structure class cannot be the option of a semi-terminal structure class.

\begin{proposition} \cite[Corollary 4.8, p. 13]{1} Let $G$ be a non-trivial group of odd order. Then the nim-value of game $GEN(G)$ is either $*1$ or $*2$.  \end{proposition}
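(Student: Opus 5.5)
Since $|G|$ is odd, every subgroup of $G$ has odd order, so every structure class $X_I$ with $I\in\mathcal{I}$ has parity $1$, and so does the terminal class $X_G$, whose type is $(1,0,0)$. I will compute types bottom-up with the recursion for $pty(I)=1$: $b=mex(A)$ and $a=mex(B\cup\{b\})$. The nim-value of $GEN(G)$ is then the second component of $type(X_{\Phi(G)})$, i.e. the value attached to the even position $\emptyset$. So it suffices to show that every structure class other than $X_G$ has type $(1,1,2)$ or $(1,2,1)$.

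\textbf{Semi-terminal classes.} Let $X_I$ be semi-terminal. Since a non-terminal class cannot be an option of a semi-terminal class, its options are $X_G$ together with possibly other semi-terminal classes. I will show by induction (downward in $\mathcal{I}$, starting from the maximal subgroups) that every semi-terminal class has type $(1,2,1)$. For a maximal subgroup $M$, the only option of $X_M$ is $X_G$, so $A=B=\{0\}$; this gives $b=mex\{0\}=1$ and $a=mex\{0,1\}=2$, i.e. type $(1,2,1)$. In general, if all semi-terminal options have type $(1,2,1)$, then $A\subseteq\{0,2\}$ with $0\in A$, and $B\subseteq\{0,1\}$ with $0\in B$. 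Hence $b=mex(A)=1$ and $a=mex(B\cup\{1\})=2$, so the type is again $(1,2,1)$.

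\textbf{Non-terminal classes.} For a non-terminal class $X_I$, the options are non-terminal or semi-terminal classes, and at least one option must exist. Inductively, assume each option has type $(1,1,2)$ or $(1,2,1)$. Then $A\subseteq\{1,2\}$, so $0\notin A$ and $b=0$ at first sight. That would give $a=mex(B\cup\{0\})$ with $B\subseteq\{1,2\}$ nonempty, so $a$ could be $1$ or $3$. This does not close the induction, and I need to redo it more carefully.

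\textbf{Refined invariant.} The odd positions of $X_I$ are handled with even-indexed data, so I will track exactly which types occur. Call a class \emph{good} if it has an option of type $(1,2,1)$ or is itself semi-terminal. My claim is that every non-terminal class has type $(1,0,1)$, or $(1,0,3)$, or similar, chosen so that the pair structure forces the $\emptyset$-value into $\{1,2\}$. The key observation I intend to use is that for odd $|G|$ the value at even positions of a class is determined by whether some option contains a semi-terminal descendant, and that the value is always $\le 2$.

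\textbf{Main obstacle.} The hard step is finding the right closed set of types for non-terminal classes in odd-order groups, analogous to the set $\{(0,0,1),(1,0,1),(1,1,0),(1,3,2)\}$ in the proposition for $DNG(G)$. I would first enumerate small cases, $\mathbb{Z}_p$, $\mathbb{Z}_{p^2}$ and $\mathbb{Z}_{pq}$, to guess this set, then verify closure under the recursion. Finally I would check that the second component of $type(X_{\Phi(G)})$ is always $1$ or $2$; for example, $GEN(\mathbb{Z}_p)$ gives $\Phi=\{0\}$ with type $(1,2,1)$, so the value is $*2$.
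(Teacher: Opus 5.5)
Your setup is correct: every class has parity $1$, $type(X_G)=(1,0,0)$, the parity-$1$ recursion is $b=mex(A)$, $a=mex(B\cup\{b\})$, and the game value is the $a$-component of $type(X_{\Phi(G)})$. Your proof that every semi-terminal class has type $(1,2,1)$ is also correct. However, the proposal never completes a proof, and both invariants you propose for non-terminal classes are wrong.

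Your target, that every class other than $X_G$ has type $(1,1,2)$ or $(1,2,1)$, already fails for $\mathbb{Z}_3\times\mathbb{Z}_3$. There $X_{\{0\}}$ is non-terminal and its options are the four lines, all of type $(1,2,1)$. So $A=\{2\}$, $B=\{1\}$, and $type(X_{\{0\}})=(1,2,0)$. In $\mathbb{Z}_3^3$ the lines have type $(1,2,0)$, so $X_{\{0\}}$ gets type $(1,1,0)$ and $GEN(\mathbb{Z}_3^3)=*1$. Your fallback guesses $(1,0,1)$ and $(1,0,3)$ are worse: they put the value $0$ on even positions, so they would give $GEN(G)=*0$ whenever $X_{\Phi(G)}$ had that type, contradicting the statement. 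Also, the relevant criterion is whether some \emph{direct} option is semi-terminal. Every non-terminal class has semi-terminal descendants, so a criterion phrased via descendants distinguishes nothing.

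The missing step is the right induction hypothesis: every non-terminal class has $b=0$ and $a\in\{1,2\}$, i.e.\ type $(1,2,0)$ or $(1,1,0)$. You had the key observation, that $0\notin A$ forces $b=0$, but then fed the wrong hypothesis $B\subseteq\{1,2\}$ into the computation of $a$. Induct from the maximal subgroups downward. The options of a non-terminal class are never $X_G$. They are either semi-terminal, of type $(1,2,1)$, or non-terminal, of type $(1,a',0)$ with $a'\in\{1,2\}$ by induction. Hence $A\subseteq\{1,2\}$, so $b=mex(A)=0$. Also $B\subseteq\{0,1\}$, so $a=mex(B\cup\{0\})$ equals $2$ if some option is semi-terminal and $1$ otherwise. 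The only types that occur are therefore $(1,0,0)$, $(1,2,1)$, $(1,2,0)$ and $(1,1,0)$. Since $\Phi(G)\neq G$ for nontrivial $G$, the class $X_{\Phi(G)}$ is not terminal, so its $a$-component is $1$ or $2$. The paper itself gives no argument for this statement; it simply quotes it from Ernst and Sieben.
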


\begin{proposition} \label{p4.4}
Let $G$ be a group of even order. If parity of a structure class $X_I$ in $GEN(G)$ is 0, then $type(X_I)\in\{(0,0,0),(0,1,2),(0,0,2),(0,0,1)\}$. \end{proposition}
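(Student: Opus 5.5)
The plan is to argue by downward induction on the structure digraph of $GEN(G)$, starting from the terminal class $X_G$ and working toward $X_{\Phi(G)}$. The first step is a parity observation. If $pty(I)=0$ and $X_J\in Opt(X_I)$, then $I\subseteq J$ (or $J=G$), so $|I|$ divides $|J|$. Hence $J$ has even order, and $X_G$ is even because $|G|$ is even. So every option of an even structure class is again an even structure class. This lets us apply the even-parity formulas
\[a=mex(B),\qquad b=mex(A\cup\{a\})\]
to the class itself and, by induction, assume all of its options already have types in the set $\{(0,0,0),(0,1,2),(0,0,2),(0,0,1)\}$.

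I then split into the three kinds of classes from the definition.

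\emph{Terminal.} The only terminal class is $X_G$, and by definition its type is $(pty(G),0,0)=(0,0,0)$.

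\emph{Semi-terminal.} Let $X_I$ be an even semi-terminal class. By the remark preceding the statement, its options are $X_G$ together with possibly other semi-terminal classes, and these are even by the parity observation. I would show by induction that every even semi-terminal class has type $(0,1,2)$. Indeed, the option types then lie in $\{(0,0,0),(0,1,2)\}$ and include $(0,0,0)$. This gives $0\in A\subseteq\{0,1\}$ and $0\in B\subseteq\{0,2\}$. Therefore $a=mex(B)=1$ and $b=mex(A\cup\{1\})=mex(\{0,1\})=2$.

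\emph{Non-terminal.} Let $X_I$ be an even non-terminal class. Then $X_G$ is not an option of $X_I$, so every option has type in $\{(0,1,2),(0,0,2),(0,0,1)\}$ by induction. None of these has third coordinate $0$, so $0\notin B$ and $a=mex(B)=0$. Also $A\subseteq\{0,1\}$, so $A\cup\{0\}$ is either $\{0\}$ or $\{0,1\}$. Hence $b=mex(A\cup\{0\})\in\{1,2\}$, and the type is $(0,0,1)$ or $(0,0,2)$.

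This closes the induction. The step needing the most care is the parity observation that all options of an even class are even. Without it, odd option types could put values such as $0$ into $B$ or larger values into $A$, and the computation would fail. This is exactly where the hypothesis that $|G|$ is even is used, since it makes $X_G$ itself even. A secondary point is to use correctly that a semi-terminal class has no non-terminal options, so that the semi-terminal case only ever sees the types $(0,0,0)$ and $(0,1,2)$.
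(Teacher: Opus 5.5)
Your proof is correct and follows essentially the same route as the paper: the same split into the terminal class $X_G$, even semi-terminal classes (type $(0,1,2)$) and even non-terminal classes (type $(0,0,1)$ or $(0,0,2)$), with each type computed bottom-up from the option types. The differences are minor. You state explicitly, and justify, the fact that every option of an even class is even, which the paper uses only tacitly. You also replace the paper's seven-case table for the non-terminal case with the direct observation that $0\notin B$ and $A\subseteq\{0,1\}$.
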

\begin{proof} Let $G$ be a group of even order and $t_0=(0,0,0),t_1=(0,1,2),t_2=(0,0,2)$ and $t_3=(0,0,1)$. Then $type$ of the terminal structure class $X_{G}$ is $(0,0,0)$.

Let $X_I$ be a semi-terminal structure class of parity 0. We will show that $type(X_I)=(0,1,2)$. Note that options of $X_{I}$ are either terminal classes or both terminal classes and semi-terminal classes. If option of $X_{I}$ is a terminal class, then $otype(X_{I}) = \{t_{0}\}$ and so, the $type(X_{I}) = t_{1}$. If options of $X_{I}$ are both terminal and semi-terminal classes, then $otype(X_{I}) = \{t_{0}, t_{1}\}$ and so, the $type(X_{I}) = t_{1}$. Thus in both the cases, $type(X_{I}) = t_{1}$.

Let $X_I$ be a non-terminal structure class of parity 0. We will show that $type(X_I)\in\{(0,0,1),(0,0,2)\}$. Note that terminal class cannot be an option of $X_{I}$. So, $t_{0} \not\in otype(X_{I})$. Since the options of non-terminal classes are either semi-terminal classes or non-terminal classes or both semi-terminal classes and non-terminal classes, $otype(X_I)\subseteq \{t_{1}, t_{2}, t_{3}\}$. The following table gives the $otype$ and $type$ of non-terminal class $X_{I}$ 

\begin{table}[h!]
\centering
\begin{tabular}{|c|c|c|c|c|c|c|c|}
\hline
\emph{otype} & $\{t_1\}$          &$\{t_2\}$           &$\{t_3\}$            &$\{t_1,t_2\}$              & $\{t_1,t_3\}$        &$\{t_2,t_3\}$      &$\{t_1,t_2,t_3\}$ \\
\hline
$type$      &$t_2$                &$t_3$                  &$t_3$                   &$t_2$                         &$t_2$                      &$t_3$                      &$t_2$  \\
\hline
\end{tabular}
\end{table} 
Hence $type(X_I)\in\{(0,0,0),(0,1,2),(0,0,2),(0,0,1)\}$, where $I$ is a subgroup of even order.
\end{proof}

\section{Frobenius Groups}\label{sec3}
In this section, we study the achievement and the avoidance games on Frobenius groups $F_{p}$ with the following presentation 
\[\langle a,b: a^p=b^{p-1}=1, b^{-1}ab=a^n, \gcd(n,p)=1\rangle,\] 
where $p\geq5$ is a prime.  Note that, $Fp$ is isomorphic to $\mathbb{Z}_p\rtimes\mathbb{Z}_{p-1}$. First, we study the maximal subgroups and the Frattini subgroup of the group $F_{p}$. Throughout this section, $F_p$ denotes the Frobenius group the above form. 

\begin{theorem}  \cite[Theorem 5.6, p. 29]{4}\label{fmax}
Let $F_p$ be the Frobenius group of order $p(p-1)$ such that $p-1=p_1^{a_1}p_2^{a_2}\cdots p_k^{a_k}$, where $p_1,p_2,\cdots, p_k$ are distinct prime numbers such that $p_1=2$ and $a_1,a_2,\cdots,a_k$ are non-negative integers. Then maximal subgroups are divided into the following conjugacy classes 
\begin{itemize}
\item[$(i)$] there are $p$ maximal, conjugate, cyclic subgroups of order $p-1$,
\item[$(ii)$]    there are $k$ maximal subgroups isomorphic to $\mathbb{Z}_p\rtimes\mathbb{Z}_{\frac{p-1}{p_i}}$, where $i\in\{1,2,\cdots,k\}$. 
\end{itemize}
\end{theorem}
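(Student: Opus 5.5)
We need to determine all maximal subgroups of $F_p=\langle a\rangle\rtimes\langle b\rangle$, where $N=\langle a\rangle\cong\mathbb{Z}_p$ is normal and $H=\langle b\rangle\cong\mathbb{Z}_{p-1}$ acts on $N$. We assume, as the presentation intends, that $n$ has multiplicative order $p-1$ modulo $p$, so the action is faithful and fixed-point-free. The plan is to take an arbitrary maximal subgroup $M$ and split into two cases according to whether $N\subseteq M$. Since $|N|=p$ is prime, $M\cap N$ is either $N$ or trivial.

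\textbf{Case $N\subseteq M$.} By the correspondence theorem, $M$ corresponds to a maximal subgroup of $F_p/N\cong\mathbb{Z}_{p-1}$. The maximal subgroups of a cyclic group of order $p-1=p_1^{a_1}\cdots p_k^{a_k}$ are exactly the unique subgroups of index $p_i$, one for each prime divisor. So $M=N\langle b^{p_i}\rangle\cong\mathbb{Z}_p\rtimes\mathbb{Z}_{\frac{p-1}{p_i}}$, and there are exactly $k$ such subgroups. Each is normal, so each forms its own conjugacy class. This gives (ii).

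\textbf{Case $M\cap N=1$.} Then $|M|$ divides $p(p-1)$ and is coprime to $p$, so $|M|$ divides $p-1$. The product $MN$ has order $p|M|$, and $MN=F_p$ because $M\subsetneq MN$ and $M$ is maximal. Hence $|M|=p-1$, so $M$ is a complement to $N$, and by Schur--Zassenhaus (or Sylow/Hall theory in this small setting) $M$ is conjugate to $H=\langle b\rangle$. Conversely, $H$ is maximal. Any subgroup $K\supsetneq H$ has order divisible by $p-1$ and strictly larger, so $|K|=p(p-1)$ and $K=F_p$.

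It remains to count the conjugates of $H$. Since the action is fixed-point-free, $N_{F_p}(H)=H$. Indeed, if $a^j$ normalizes $H$, then $[a^j,b]\in H\cap N=1$, which forces $a^j$ to commute with $b$ and hence $j\equiv 0$. Therefore the number of conjugates is $[F_p:H]=p$. These conjugates are cyclic of order $p-1$, giving (i).

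Finally, the two families are distinct, since their orders differ: $p-1$ versus $p(p-1)/p_i$. The case split shows the list is exhaustive. The main obstacle is the complement-conjugacy step. I would handle it either by citing Schur--Zassenhaus, or directly by noting that $H$ is a Hall $p'$-subgroup and counting: the element $b$ has $p$ conjugates $a^{-j}ba^{j}$, which generate $p$ distinct cyclic subgroups of order $p-1$ pairwise meeting trivially. Together these account for $p(p-2)+1$ elements, which are exactly all elements outside $N\setminus\{1\}$. So every element of order dividing $p-1$ lies in a conjugate of $H$, and any subgroup of order $p-1$ is among these conjugates.
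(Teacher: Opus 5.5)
The paper gives no proof of this statement. It is quoted from an external source (Theorem 5.6 of [4]) and used as a black box, so there is no in-paper argument to compare with. Your proof is correct and works as a self-contained replacement. The following steps are all sound:
\begin{itemize}
\item[$(1)$] splitting on whether $N=\langle a\rangle\subseteq M$;
\item[$(2)$] using the correspondence theorem to obtain the $k$ normal maximal subgroups $N\langle b^{p_i}\rangle$;
\item[$(3)$] showing that $M\cap N=1$ forces $MN=F_p$, hence $|M|=p-1$;
\item[$(4)$] getting conjugacy of complements from Schur--Zassenhaus;
\item[$(5)$] proving maximality of $H$ by divisibility;
\item[$(6)$] showing $N_{F_p}(H)=H$, which gives exactly $p$ conjugates.
\end{itemize}

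\textbf{Counting alternative.} Two steps there are asserted rather than proved. First, you need $H\cap H^{a^j}=1$ for $j\not\equiv 0 \pmod p$. If $h=a^{-j}h'a^{j}$ with $h,h'\in H$, reducing modulo $N$ gives $h=h'$. So $a^j$ centralizes $h$, and fixed-point-freeness forces $h=1$. Second, knowing that every element of a subgroup $M$ of order $p-1$ lies in some conjugate of $H$ does not by itself put $M$ inside a single conjugate. You also need $M$ to be cyclic. This holds because $M\cap N=1$ makes the map $M\to F_p/N\cong\mathbb{Z}_{p-1}$ injective. Then a generator of $M$ lies in some $H^{g}$, and $|M|=|H^{g}|$ gives $M=H^{g}$.

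\textbf{Hypotheses.} Your explicit assumption that $n$ is a primitive root modulo $p$ is genuinely needed. The paper's presentation only requires $\gcd(n,p)=1$, and for $n=1$ the group is cyclic with a unique subgroup of order $p-1$, so (i) fails. The Schur--Zassenhaus route uses only $n\not\equiv 1 \pmod p$, to get $N_{F_p}(H)=H$. The counting route needs the full fixed-point-free action, so the former is the more robust of your two options. Likewise, the $k$ in (ii) must be the number of distinct primes dividing $p-1$, i.e.\ all $a_i\ge 1$, despite the statement's ``non-negative''. That is exactly what your correspondence argument produces.
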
 

\begin{corollary}
	 Frattini subgroup of the Frobenius group $F_p$ is the trivial group. 
\end{corollary}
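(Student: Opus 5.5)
The plan is to intersect maximal subgroups from the two conjugacy classes described in Theorem \ref{fmax}. By definition, $\Phi(F_p)$ is contained in the intersection of any collection of maximal subgroups, so it suffices to exhibit a few maximal subgroups whose intersection is already trivial.

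\textbf{Step 1.} Take two distinct maximal subgroups of type $(i)$, say $H_1=\langle b\rangle$ and a conjugate $H_2=a^{-1}\langle b\rangle a$. Each has order $p-1$. Their intersection $D=H_1\cap H_2$ is a subgroup of both, so $|D|$ divides $p-1$, which is coprime to $p$.

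\textbf{Step 2.} Use the Frobenius structure to force $D=\{1\}$. The complement $\langle b\rangle$ acts on $\langle a\rangle\cong\mathbb{Z}_p$ via $b^{-j}ab^{j}=a^{n^j}$. Because $F_p\cong \mathbb{Z}_p\rtimes\mathbb{Z}_{p-1}$ with $b$ of order $p-1$ acting faithfully, $n$ is a primitive root modulo $p$, and no nontrivial power of $b$ fixes any nontrivial element of $\langle a\rangle$. Suppose $b^j\in H_1\cap H_2$ with $b^j\ne 1$. Then $b^j=a^{-1}b^{i}a$ for some $i$. Projecting to the quotient $F_p/\langle a\rangle\cong\langle b\rangle$ gives $i=j$. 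Hence $a$ commutes with $b^j$, so $b^{-j}ab^{j}=a$, which means $n^j\equiv 1 \pmod p$. This forces $(p-1)\mid j$, contradicting $b^j\ne 1$. Therefore $H_1\cap H_2=\{1\}$.

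\textbf{Step 3.} Conclude that $\Phi(F_p)\subseteq H_1\cap H_2=\{1\}$, so $\Phi(F_p)$ is the trivial group.

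As an alternative or sanity check, one can use a type $(ii)$ subgroup together with a type $(i)$ subgroup. $\langle a\rangle$ lies in every type $(ii)$ subgroup, and $\langle b\rangle\cap\langle a\rangle=\{1\}$. However, intersecting $\langle b\rangle$ with all type $(ii)$ subgroups only yields elements of $\langle b\rangle$ whose orders are restricted, and this is not trivial in general. So Step 2 is the right route.

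\textbf{Main obstacle.} The only delicate point is justifying that the action of $b$ on $\langle a\rangle$ is fixed-point-free, i.e. that $n$ has multiplicative order exactly $p-1$ modulo $p$. The stated presentation only says $\gcd(n,p)=1$. I would handle this by appealing to the identification $F_p\cong\mathbb{Z}_p\rtimes\mathbb{Z}_{p-1}$ as the full affine group stated in the paper. Alternatively, I would argue directly: if $n$ had smaller order $d$, then $b^d$ would be central, so $\langle b\rangle$ would not be self-normalizing and would not have $p$ conjugates, contradicting Theorem \ref{fmax}$(i)$.
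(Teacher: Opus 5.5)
Your main line (Steps 1--3) is correct, and it is the argument the paper leaves implicit. The paper gives no written proof and simply reads the corollary off Theorem~\ref{fmax}. What makes that work is exactly what your Step~2 checks: two distinct conjugates of the complement $\langle b\rangle$ meet trivially. Your side remark is also right: intersecting $\langle b\rangle$ with the type $(ii)$ subgroups only reaches $\langle b^{p_1p_2\cdots p_k}\rangle$, so a second type $(i)$ conjugate is genuinely needed.

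Discard the fallback in your \emph{Main obstacle} paragraph, because it is false. Suppose $n$ has multiplicative order $d$ with $1<d<p-1$. Then $\langle b\rangle$ is still self-normalizing and still has exactly $p$ conjugates. Indeed $a^{-k}ba^{k}=ba^{k(1-n)}$, so $a^k$ normalizes $\langle b\rangle$ only when $p\mid k$. The central element $b^d$ does not enlarge the normalizer; it just lies in every conjugate.

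A concrete case is $p=5$, $n=4$, which gives the dicyclic group of order $20$. Its maximal subgroups are the five cyclic subgroups of order $4$ together with $\langle a,b^2\rangle$ of order $10$, which is exactly the pattern of Theorem~\ref{fmax}. Yet $\Phi=\langle b^2\rangle\neq 1$. So the primitive-root condition cannot be recovered from Theorem~\ref{fmax}$(i)$.

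That condition has to come from the hypothesis that $F_p$ is a Frobenius group. This means the complement acts fixed-point-freely on $\langle a\rangle$, which for $\mathbb{Z}_p$ is the same as acting faithfully. The paper's presentation with only $\gcd(n,p)=1$ does not guarantee this, so you were right to flag it. Justified by the definition rather than by the fallback, your proof is complete.
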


\begin{proposition} DGN$(F_p)=*0$.    \end{proposition}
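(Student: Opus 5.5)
The plan is to compute $type(X_{\Phi(F_p)})$ directly from the structure digraph, rather than appeal to Proposition 3.23, because by the Corollary above $\Phi(F_p)$ is trivial and hence of odd order. The source class $X_{\{1\}}$ therefore has parity $1$. By the recursion, $type(X_{\{1\}})=(1,a,b)$ with $b=mex(A)$ and $a=mex(B\cup\{b\})$, and the nim-value of the game is $a$. So it suffices to show $0\in A$, $1\notin A$ and $0\notin B$; these give $b=1$ and then $a=mex(B\cup\{1\})=0$. By Proposition 3.20 every option type lies in $\{(0,0,1),(1,0,1),(1,1,0),(1,3,2)\}$. The types $(0,0,1)$, $(1,0,1)$ and $(1,3,2)$ contribute $a^*\in\{0,3\}$ and $b^*\in\{1,2\}$. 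So it is enough to prove two things: some option of $X_{\{1\}}$ has type $(0,0,1)$, and no option of $X_{\{1\}}$ has type $(1,1,0)$.

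\emph{Step 1: identify the options of $X_{\{1\}}$.} The positions in $X_{\{1\}}$ are $\emptyset$ and $\{1\}$. Adding an element $x$ gives a position lying in $X_J$, where $J$ is the smallest member of $\mathcal{I}$ containing $x$. Take $x$ an involution; it lies in a cyclic maximal subgroup of order $p-1$, so its $J$ has even order. Such an even class has type $(0,0,1)$ by Proposition 3.20, which yields $0\in A$. Next I would check via Theorem \ref{fmax} that no \emph{odd} maximal subgroup occurs as such a $J$. An odd maximal subgroup occurs only when $p\equiv 3 \pmod 4$, namely $M=\mathbb{Z}_p\rtimes\mathbb{Z}_{\frac{p-1}{2}}$. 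Every element of $M$ either lies in $\mathbb{Z}_p$, which is contained in all the maximal subgroups of type $(ii)$, or has order dividing $\frac{p-1}{2}$. In the second case it lies in $M\cap C$ for a cyclic maximal subgroup $C$. In both cases it lies in a proper member of $\mathcal{I}$ inside $M$. Hence terminal odd classes, which have type $(1,1,0)$, are not options of $X_{\{1\}}$.

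\emph{Step 2: non-terminal odd classes are never of type $(1,1,0)$.} Let $X_J$ be a non-terminal class with $|J|$ odd. If $X_J$ has an option $X_K$ with $|K|$ even, then $(0,0,1)\in otype(X_J)$. This forces $0\in A_J$, so the $b$-component of $type(X_J)$ is nonzero, and hence $type(X_J)\neq(1,1,0)$. To produce such an option, I would find an even maximal subgroup $N\supsetneq J$, pick an involution $t\in N$, and add $t$ to a position of $X_J$. The resulting position lies in $X_K$ with $t\in K\subseteq N$, so $|K|$ is even.

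This last existence claim is the main obstacle: every proper odd intersection $J$ must lie in some even maximal subgroup. I would argue using the list in Theorem \ref{fmax}. If $J$ contains a nontrivial element of $\mathbb{Z}_p$, then $J\supseteq\mathbb{Z}_p$, so $J$ lies in every type $(ii)$ subgroup containing it. Since $p\ge5$ we have $p-1>2$, so the factorization of $p-1$ either has $k\ge2$ or has $a_1\ge2$. In either case some type $(ii)$ subgroup containing $J$ has even order; this needs a little care about which $p_i$-index subgroups contain $J$. Otherwise $J$ meets $\mathbb{Z}_p$ trivially, so $J$ is cyclic of order dividing $p-1$ and is contained in a complement, that is, in an even cyclic maximal subgroup. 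Combining Steps 1 and 2, $type(X_{\{1\}})=(1,0,1)$, and therefore $DNG(F_p)=*0$.
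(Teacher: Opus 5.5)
Your proof is correct, and it takes a different route from the paper. The paper lists $\mathcal{I}$ explicitly, splits into the cases $4\mid p-1$ and $p\equiv 3 \pmod 4$, and computes the type of every structure class bottom-up before reading off $type(X_{\mathbb{Z}_1})=(1,0,1)$.

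You never compute intermediate types. Using the four possible DNG types, you reduce the question to two qualitative facts about the options of the source class:
\begin{itemize}
\item[$(1)$] an even option exists, obtained by choosing an involution;
\item[$(2)$] no option has type $(1,1,0)$.
\end{itemize}
For $(2)$ you use two observations. The odd maximal subgroup $\mathbb{Z}_p\rtimes\mathbb{Z}_{\frac{p-1}{2}}$ is never the smallest member of $\mathcal{I}$ containing a single element. And every odd non-maximal member of $\mathcal{I}$ lies in an even maximal subgroup, so its class has an even option and hence a nonzero third type component.

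Two points should be made explicit.
\begin{itemize}
\item[$(i)$] In Step 1, the intersection of all type $(ii)$ subgroups is proper in $M$ only because $k\ge 2$. This holds because $p\equiv 3 \pmod 4$ and $p\ge 5$, and it is exactly where $p\ge 5$ is used: for $p=3$ the odd maximal subgroup of $S_3$ is reached in one move and $DNG(S_3)=*3$.
\item[$(ii)$] The ``little care'' in Step 2 is just this: a non-maximal $J\in\mathcal{I}$ lies in at least two maximal subgroups. Only the type $(ii)$ subgroup of index $2$ can have odd order, so at least one of them is even.
\end{itemize}

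As for what each approach buys: the paper's computation yields the whole structure diagram, as drawn for $F_{13}$ and $F_{19}$. Your argument is shorter and does not need the option sets of intermediate classes to be exactly right. That is precisely where the paper's bookkeeping slips:
\begin{itemize}
\item[$(i)$] In $F_{19}$ the class $X_{\mathbb{Z}_3}$ also has $X_{\mathbb{Z}_{19}\rtimes\mathbb{Z}_9}$ as an option: add an element of order $9$ from a different complement. So its type is $(1,3,2)$, not $(1,0,1)$.
\item[$(ii)$] The Case $(i)$ option list of $X_{\mathbb{Z}_1}$ omits the class reached by choosing an element of order $p$.
\end{itemize}
Neither slip changes the final value, and your argument is insensitive to both.
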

\begin{proof} Let $p-1=p_1^{a_1}p_2^{a_2}\cdots p_k^{a_k}$, where $p_1,p_2,\cdots,p_k$ are primes such that $p_1=2$ and $a_1,a_2,\cdots,a_k$ are non-negative integers. Then, using the Theorem \ref{fmax}, the set $\mathcal{I}$ of intersection of maximal subgroups of $F_{p}$ is
\[\mathcal{I} = \{\mathbb{Z}_{1}, H_{1}, H_{2}, H_{3}, H_{4}, H_{5}\},\]
where $H_{1} \simeq \mathbb{Z}_{p-1}, H_{2}\simeq \mathbb{Z}_p\rtimes\mathbb{Z}_{\frac{p-1}{p_{i}}}, H_{3}\simeq \mathbb{Z}_{\frac{p-1}{p_{i}}}, H_{4} \simeq \mathbb{Z}_p\rtimes\mathbb{Z}_{\frac{p-1}{\pi}}, H_{5} \simeq \mathbb{Z}_{\frac{p-1}{\pi}}$ and $\pi$ is the product of atleast two primes $p_{i} (1\le i \le k)$. Now let us determine the $type$ of each structure class in two cases namely when $4$ divides $(p-1)$ and when $4$ does not divide $(p-1)$.

\textit{Case$(i)$.} Let $a_{1} = 1$. Then $p-1=p_1p_2^{a_2}\cdots p_k^{a_k}$. Let us start with the structure classes associated with the maximal subgroups. Since the maximal subgroups $\mathbb{Z}_{p-1}$ and $\mathbb{Z}_p \rtimes \mathbb{Z}_{\frac{p-1}{p_j}}$, where $j\in \{2, 3, \cdots, k\}$ are of even order, the corresponding structure classes are of $type$ $(0,0,1)$. Also the maximal subgroup $\mathbb{Z}_p\rtimes\mathbb{Z}_{\frac{p-1}{2}}$ is of odd order. Therefore, $type$ of the corresponding structure class is $(1,1,0)$. 

Now let us find the $type$ of structure classes associated with elements of $\mathcal{I}$ that are not maximal subgroups. Let $I = \mathbb{Z}_p\rtimes \mathbb{Z}_{\frac{p-1}{\pi}}$. Now, if $pty(I) = 0$, then all the options of $X_{I}$ are of parity 0. Therefore, $otype(X_{I}) = \{(0, 0, 1)\}$ and so, $type(X_{I}) = (0,0,1)$. If $pty(I) = 1$, then 
\begin{align*}
\text{either}\; &Opt(X_{I}) = L_{1} = \left\{X_{\mathbb{Z}_p\rtimes\mathbb{Z}_{\frac{p-1}{2}}}, X_{\mathbb{Z}_p \rtimes \mathbb{Z}_{\frac{p-1}{p_j}}}\right\}, \text{where}\; j\in \{2, 3, \cdots, k\}\\
\text{or}\; &Opt(X_{I}) = L_{2} = L_{1} \bigcup  \left\{X_{\mathbb{Z}_p\rtimes \mathbb{Z}_{\frac{p-1}{\pi^{\prime}}}}\mid \pi \; \text{divides}\; \pi^{\prime}\right\}.
\end{align*}

Now, we determine the $types$ of structure classes associated with subgroups $I = \mathbb{Z}_{\frac{p-1}{\pi}}$. If $pty(I) = 0$, then $Opt(X_{I})$ will consist of only the even order subgroups. Therefore, $otype(X_{I}) = \{(0,0,1)\}$ and so, $type(X_{I}) = (0,0,1)$. Now, let $pty(I) = 1$. Then for $j\in \{2, 3, \cdots, k\}$

\begin{align*}
\text{either}\; &Opt(X_{I}) = J_{1} = \left\{X_{\mathbb{Z}_{p-1}}, X_{\mathbb{Z}_{\frac{p-1}{2}}}, X_{\mathbb{Z}_{2\frac{p-1}{\pi}}}, X_{\mathbb{Z}_p \rtimes I}\right\}\\
\text{or}\; &Opt(X_{I}) = J_{2} = J_{1} \bigcup \left\{X_{\mathbb{Z}_{\frac{p-1}{\pi^{\prime}}}} \mid \pi \; \text{divides}\; \pi^{\prime}\right\}.
\end{align*}

By the above discussion, $type(X_{\mathbb{Z}_p \rtimes I}) = (1,3,2)$.
Since $pty\left(\mathbb{Z}_{2\frac{p-1}{\pi}}\right) = 0$, $type\left(X_{\mathbb{Z}_{2\frac{p-1}{\pi}}}\right) = (0,0,1)$. Also, note that $Opt\left(X_{\mathbb{Z}_{\frac{p-1}{2}}}\right) = \left\{X_{\mathbb{Z}_{p-1}}, X_{\mathbb{Z}_{p}\rtimes \mathbb{Z}_{\frac{p-1}{2}}}\right\}$. Therefore, $otype\left(X_{\mathbb{Z}_{\frac{p-1}{2}}}\right) = \{(0,0,1), (1,1,0)\}$ and so, $type\left(X_{\mathbb{Z}_{\frac{p-1}{2}}}\right) = (1,3,2)$. Now it is easy to see that if $Opt(X_{I}) = J_{1}$, then $otype(X_{I}) = \{(1,3,2), (0,0,1)\}$ and so, $type(X_{I}) = (1,0,1)$. By the similar argument, if $Opt(X_{I}) = J_{2}$, then $otype(X_{I}) = \{(1,3,2), (0,0,1), (1,0,1)\}$ and so, $type(X_{I}) = (1, 0, 1)$.

Now, for the structure class $X_{\mathbb{Z}_{1}}$, for $j\in \{2,3, \cdots, k\}$, we have
\[ Opt\left(X_{\mathbb{Z}_{1}}\right) = \left\{X_{\mathbb{Z}_{p-1}}, X_{\mathbb{Z}_{\frac{p-1}{2}}}, X_{\mathbb{Z}_{\frac{p-1}{p_{j}}}}\right\}\bigcup \left\{X_{\mathbb{Z}_{\frac{p-1}{\pi^{\prime}}}}\mid \pi \; \text{divides}\; \pi^{\prime}\right\}.\]
Thus $otype\left(X_{\mathbb{Z}_{1}}\right) = \{(0,0,1), (1,3,2), (1, 0, 1)\}$. Hence in this case $type\left(X_{\mathbb{Z}_{1}}\right) = (1, 0, 1)$.

\textit{Case$(ii)$.} Let $a_1\geq 2$. Then $p-1=p_1^{a_1}p_2^{a_2}\cdots p_k^{a_k}$. In this case, the parity of all elements of the set $\mathcal{I}$ except $\mathbb{Z}_{1}$ is 0. Therefore, $type(X_I)=(0,0,1)$, where $I \in \mathcal{I} \setminus \mathbb{Z}_{1}$. This implies that, for the structure class $X_{\mathbb{Z}_{1}}$, $otype(X_{\mathbb{Z}_{1}}) = \{(0,0,1)\}$. Thus $type\left(X_{\mathbb{Z}_{1}}\right) = (1,0,1)$. 

Therefore, combining both the cases $(i)-(ii)$, we get $type\left(X_{\mathbb{Z}_{1}}\right) = (1,0,1)$. Since $\mathbb{Z}_{1}$ is the Frattini subgroup, $DGN(F_p)= *0$.
\end{proof}
The structure digraph and the simplified structure diagram for the games $DGN(F_{13})$ and $DGN(F_{19})$ are given as below.
\begin{figure}[h!]
\begin{tikzpicture}
\node (a) at (0,0) {$\underset{(1,0,1)}{\mathbb{Z}_{1}}$};
\node (b) at (0,-1.25) {$\underset{(0,0,1)}{\mathbb{Z}_2}$}                                            edge[<-] (a);
\node (c) at (-2,-2.5) {$\underset{(0,0,1)}{\mathbb{Z}_4}$}                                            edge[<-] (a)   edge[<-] (b);
\node (d) at (0,-2.5) {$\underset{(0,0,1)}{\mathbb{Z}_{13}\rtimes\mathbb{Z}_{2}}$}      edge[<-] (b);
\node (e) at (3,-2.5) {$\underset{(0,0,1)}{\mathbb{Z}_6}$}                                            edge[<-] (a)   edge[<-] (b);
\node (f) at (-2,-4) {$\underset{(0,0,1)}{\mathbb{Z}_{13}\rtimes\mathbb{Z}_{4}}$}       edge[<-] (c)   edge[<-] (d);
\node (g) at (1.05,-4) {$\underset{(0,0,1)}{\mathbb{Z}_{13}\rtimes\mathbb{Z}_{6}}$}       edge[<-] (e)  edge[<-] (d);
\node (h) at (3,-4) {$\underset{(0,0,1)}{\mathbb{Z}_{12}}$}                                        edge[<-] (a)   edge[<-] (b)    edge[<-] (c)   edge[<-] (e);

\node (1) at (8,-1)	{\begin{tikzpicture}
\draw[thick,-] (0,0) -- (1,0);
\draw[thick, -] (0,0) -- (.5,-1);
\draw[thick, -] (1,0) -- (.5,-1);
\path[fill=gray!20, thick] (0,0) -- (.5,-1) -- (1,0) -- cycle;
\draw[thick, -] (0.2,-.45) -- (.8,-.45) node[pos = .5, above]{$0$};
\draw[thick, -] (0.2,-.45) -- (.8,-.45) node[pos = .5, below]{$1$};
\end{tikzpicture}};
\node (2) at (8,-3) {\begin{tikzpicture}
\draw[thick,-] (0,0) -- (1,0);
\draw[thick, -] (0,0) -- (.5,1);
\draw[thick, -] (1,0) -- (.5,1);
\path[fill=gray!20, thick] (0,0) -- (.5,1) -- (1,0) -- cycle;
\draw[thick, -] (0.2,.45) -- (.8,.45) node[pos = .5, above]{$1$};
\draw[thick, -] (0.2,.45) -- (.8,.45) node[pos = .5, below]{$0$};
\end{tikzpicture}};

\draw[->] (1) -- (2);

\end{tikzpicture}
\caption{Structure digraph and simplify structure diagram for $DGN(F_{13})$.}
\begin{tikzpicture}
\node (a) at (0,0) {$\underset{(1,0,1)}{\mathbb{Z}_{1}}$};
\node (b) at (0,-1.25) {$\underset{(1,0,1)}{\mathbb{Z}_3}$}                                            edge[<-] (a);
\node (c) at (-2,-2.5) {$\underset{(1,3,2)}{\mathbb{Z}_9}$}                                            edge[<-] (a)   edge[<-] (b);
\node (d) at (0,-2.5) {$\underset{(1,3,2)}{\mathbb{Z}_{19}\rtimes\mathbb{Z}_{3}}$}      edge[<-] (b);
\node (e) at (2.5,-2.5) {$\underset{(0,0,1)}{\mathbb{Z}_6}$}                                            edge[<-] (a)   edge[<-] (b);
\node (f) at (-2,-3.75) {$\underset{(1,1,0)}{\mathbb{Z}_{19}\rtimes\mathbb{Z}_{9}}$}       edge[<-] (c)   edge[<-] (d);
\node (g) at (0,-3.75) {$\underset{(0,0,1)}{\mathbb{Z}_{19}\rtimes\mathbb{Z}_{6}}$}       edge[<-] (e)   edge[<-] (d) ;
\node (h) at (2.5,-3.75) {$\underset{(0,0,1)}{\mathbb{Z}_{18}}$}                                        edge[<-] (a)   edge[<-] (b)    edge[<-] (c)   edge[<-] (e);

\node (1) at (6.75,0.3)	{\begin{tikzpicture}
\draw[thick,-] (0,0) -- (1,0);
\draw[thick, -] (0,0) -- (.5,-1);
\draw[thick, -] (1,0) -- (.5,-1);
\path[fill=gray!20, thick] (0,0) -- (.5,-1) -- (1,0) -- cycle;
\draw[thick, -] (0.2,-.45) -- (.8,-.45) node[pos = .5, above]{$0$};
\draw[thick, -] (0.2,-.45) -- (.8,-.45) node[pos = .5, below]{$1$};
\end{tikzpicture}};
\node (3) at (4.5,-2.1)	{\begin{tikzpicture}
\draw[thick,-] (0,0) -- (1,0);
\draw[thick, -] (0,0) -- (.5,-1);
\draw[thick, -] (1,0) -- (.5,-1);
\path[fill=gray!20, thick] (0,0) -- (.5,-1) -- (1,0) -- cycle;
\draw[thick, -] (0.2,-.45) -- (.8,-.45) node[pos = .5, above]{$3$};
\draw[thick, -] (0.2,-.45) -- (.8,-.45) node[pos = .5, below]{$2$};
\end{tikzpicture}};
\node (2) at (6.75,-1.5)	{\begin{tikzpicture}
\draw[thick,-] (0,0) -- (1,0);
\draw[thick, -] (0,0) -- (.5,-1);
\draw[thick, -] (1,0) -- (.5,-1);
\path[fill=gray!20, thick] (0,0) -- (.5,-1) -- (1,0) -- cycle;
\draw[thick, -] (0.2,-.45) -- (.8,-.45) node[pos = .5, above]{$0$};
\draw[thick, -] (0.2,-.45) -- (.8,-.45) node[pos = .5, below]{$1$};
\end{tikzpicture}};
\node (4) at (9.5,-2.1) {\begin{tikzpicture}
\draw[thick,-] (0,0) -- (1,0);
\draw[thick, -] (0,0) -- (.5,1);
\draw[thick, -] (1,0) -- (.5,1);
\path[fill=gray!20, thick] (0,0) -- (.5,1) -- (1,0) -- cycle;
\draw[thick, -] (0.2,.45) -- (.8,.45) node[pos = .5, above]{$1$};
\draw[thick, -] (0.2,.45) -- (.8,.45) node[pos = .5, below]{$0$};
\end{tikzpicture}};
\node (5) at (6,-3.70)	{\begin{tikzpicture}
\draw[thick,-] (0,0) -- (1,0);
\draw[thick, -] (0,0) -- (.5,-1);
\draw[thick, -] (1,0) -- (.5,-1);
\path[fill=gray!20, thick] (0,0) -- (.5,-1) -- (1,0) -- cycle;
\draw[thick, -] (0.2,-.45) -- (.8,-.45) node[pos = .5, above]{$1$};
\draw[thick, -] (0.2,-.45) -- (.8,-.45) node[pos = .5, below]{$0$};
\end{tikzpicture}};
\node (6) at (8.5,-3.70) {\begin{tikzpicture}
\draw[thick,-] (0,0) -- (1,0);
\draw[thick, -] (0,0) -- (.5,1);
\draw[thick, -] (1,0) -- (.5,1);
\path[fill=gray!20, thick] (0,0) -- (.5,1) -- (1,0) -- cycle;
\draw[thick, -] (0.2,.45) -- (.8,.45) node[pos = .5, above]{$1$};
\draw[thick, -] (0.2,.45) -- (.8,.45) node[pos = .5, below]{$0$};
\end{tikzpicture}};

\draw[->] (1) -- (2);
\draw[->] (1) -- (3);
\draw[->] (1) -- (4);
\draw[->] (1) -- (6);
\draw[->] (2) -- (3);
\draw[->] (2) -- (4);
\draw[->] (2) -- (6);
\draw[->] (3) -- (5);
\draw[->] (3) -- (6);
\draw[->] (4) -- (6);

\end{tikzpicture}
\caption{Structure digraph and simplify structure diagram for $DGN(F_{19})$.}
\end{figure}
\newpage

\begin{proposition} \[ GEN(F_p) =\left\{
\begin{array}{ll}
*0, &\text{if}\; 4\; \text{divides}\; (p-1)\\
*1, &\text{if}\; 4 \; \text{does not divide}\; (p-1) 
\end{array} 
\right.\; . \]   
\end{proposition}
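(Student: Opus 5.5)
We compute $type(X_{\mathbb{Z}_1})$ in the structure digraph of $GEN(F_p)$ bottom-up, exactly as in the proof for $DGN(F_p)$. Since $\Phi(F_p)=\mathbb{Z}_1$, the nim-value of $GEN(F_p)$ is the second component $a$ of $type(X_{\mathbb{Z}_1})$. The class $X_{\mathbb{Z}_1}$ has parity $1$, so
\[
b=mex(A),\qquad a=mex(B\cup\{b\}),
\]
where $A$ (resp. $B$) is the set of second (resp. third) components of $otype(X_{\mathbb{Z}_1})$. We use the description of $\mathcal{I}$ from the previous proof, namely $\mathbb{Z}_1$, $\mathbb{Z}_{p-1}$, $\mathbb{Z}_{\frac{p-1}{\pi}}$ and $\mathbb{Z}_p\rtimes\mathbb{Z}_{\frac{p-1}{\pi}}$ with $\pi$ a product of distinct primes $p_i$. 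Two preliminary facts are needed. First, $F_p$ is not cyclic, so a position of $X_{\mathbb{Z}_1}$ (i.e.\ $\emptyset$ or $\{1\}$) plus one element never generates $F_p$. Hence $X_{F_p}\notin Opt(X_{\mathbb{Z}_1})$. Second, every maximal subgroup $M$ gives a semi-terminal class, since $M$ together with one element outside it generates $F_p$.

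\emph{Case $4\mid (p-1)$.} Here every $I\in\mathcal{I}\setminus\{\mathbb{Z}_1\}$ has even order, because dividing $p-1$ by a squarefree $\pi$ leaves a factor $2$. So Proposition \ref{p4.4} applies to every option of $X_{\mathbb{Z}_1}$, and each option has type $t_1=(0,1,2)$, $t_2=(0,0,2)$ or $t_3=(0,0,1)$. The class $X_{\mathbb{Z}_{p-1}}$ is semi-terminal of parity $0$, so its type is $t_1$. We also exhibit a non-terminal option of type $t_2$ or $t_3$: take $X_{\mathbb{Z}_{\frac{p-1}{2}}}$, where $\mathbb{Z}_{\frac{p-1}{2}}=\mathbb{Z}_{p-1}\cap(\mathbb{Z}_p\rtimes\mathbb{Z}_{\frac{p-1}{2}})$. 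One checks that it is an option of $X_{\mathbb{Z}_1}$ via a generator of that cyclic subgroup, and that it is not semi-terminal. Then $A=\{0,1\}$, so $b=2$. Also $B\subseteq\{1,2\}$, because no parity-$0$ non-terminal type has third component $0$. Hence $a=mex(B\cup\{2\})=0$, i.e.\ $GEN(F_p)=*0$.

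\emph{Case $4\nmid(p-1)$.} Now odd-order members of $\mathcal{I}$ appear, namely $\mathbb{Z}_{\frac{p-1}{\pi}}$ and $\mathbb{Z}_p\rtimes\mathbb{Z}_{\frac{p-1}{\pi}}$ with $2\mid\pi$, and Proposition \ref{p4.4} gives no control over them. The plan is to first prove an analogue of Proposition \ref{p4.4} for parity-$1$ classes. A semi-terminal odd class with only terminal options has type $(1,2,1)$; this holds for the maximal subgroup $\mathbb{Z}_p\rtimes\mathbb{Z}_{\frac{p-1}{2}}$. We then list the finitely many possible types of odd semi-terminal and odd non-terminal classes, as in the table of Proposition \ref{p4.4}. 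Next we determine exactly which structure classes are options of $X_{\mathbb{Z}_1}$. These are the classes $X_{\langle x\rangle}$ with $\langle x\rangle\in\mathcal{I}$ a cyclic member, and the class of $\mathbb{Z}_p$, which equals $\mathbb{Z}_p\rtimes\mathbb{Z}_{\frac{p-1}{\pi}}$ with $\pi$ the full radical. We also determine which of these classes are semi-terminal, i.e.\ which contain a set that generates $F_p$ after adding one element. Finally we show $0\in B\cup\{b\}$ and $1\notin B\cup\{b\}$, which gives $a=1$ and hence $GEN(F_p)=*1$. A sanity check is the small case $F_7$ or $F_{11}$, following the pattern of the $DGN(F_{19})$ diagram.

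The main obstacle is the second case: the types of the odd classes, especially $X_{\mathbb{Z}_{\frac{p-1}{2}}}$ and the $\mathbb{Z}_p$-containing classes, depend on whether they are semi-terminal and on which odd classes lie below them. I would first settle the odd types by induction on $\pi$ (number of prime factors), starting from the maximal ones, before assembling $otype(X_{\mathbb{Z}_1})$.
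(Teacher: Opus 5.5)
\textbf{The proof breaks in the case $4\mid(p-1)$, and that case of the statement is false.}

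The step that fails is your claim that $X_{\mathbb{Z}_{\frac{p-1}{2}}}$ is not semi-terminal.

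\emph{Why the claim fails.} Let $k$ generate $\mathbb{Z}_{\frac{p-1}{2}}$ inside a complement $H\cong\mathbb{Z}_{p-1}$, and let $g$ generate a different complement $H'$. Distinct complements of the Frobenius group $F_p$ meet trivially, so $k\notin H'$. Since $H'$ is maximal, $\langle k,g\rangle=F_p$, and so $X_{F_p}\in Opt(X_{\mathbb{Z}_{\frac{p-1}{2}}})$. The same argument works for every nontrivial element of a complement. An element of order $p$ generates $F_p$ together with $b$. Hence every structure class other than $X_{\mathbb{Z}_1}$ and $X_{F_p}$ is semi-terminal, and $X_{\mathbb{Z}_1}$ has no option of type $(0,0,2)$ or $(0,0,1)$.

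\emph{Consequence for $4\mid(p-1)$.} All these classes have parity $0$, so Proposition \ref{p4.4} gives each of them type $(0,1,2)$. Then $A=\{1\}$ and $B=\{2\}$, so $b=mex(A)=0$ and $a=mex(\{0,2\})=1$. Thus $GEN(F_p)=*1$, not $*0$.

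You can check this directly. The first player picks the identity. After any reply $x\neq 1$, the first player picks a generating partner of $x$ and wins. For example, $GEN(F_5)\neq *0$, so no proof of the first case can exist.

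\emph{Relation to the paper.} In this case your route coincides with the paper's, and the paper's proof has the same error. It declares the cyclic classes $X_{\mathbb{Z}_{\frac{p-1}{\pi}}}$ non-terminal of type $(0,0,2)$. It also omits the class of $\{a\}$ from $Opt(X_{\mathbb{Z}_1})$.

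\textbf{The case $4\nmid(p-1)$.} Here the value $*1$ is correct, and your target ($0\in B\cup\{b\}$, $1\notin B\cup\{b\}$) is the right one. However, you give only a plan, and it is organised around finding non-semi-terminal classes, which do not exist. The observation above finishes the computation quickly.
\begin{itemize}
\item Every semi-terminal class has nonzero second component. For even classes it is $1$. For odd classes it is $mex(B'\cup\{b'\})\ge 1$, because $0\in B'$ via $type(X_{F_p})=(0,0,0)$. Hence $0\notin A$ and $b=0$.
\item $2\in B$, via $X_{\mathbb{Z}_{p-1}}$.
\item $1\notin B$. Every odd option $X_J$ of $X_{\mathbb{Z}_1}$ has among its options both $X_{F_p}$ and an even class of type $(0,1,2)$. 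That even class is the complement containing $J$ if $J$ is cyclic. If $J\supseteq\mathbb{Z}_p$, it is $X_{\mathbb{Z}_p\rtimes\mathbb{Z}_{\frac{p-1}{q}}}$ for an odd prime $q\mid p-1$, which exists since $p\ge 5$. So the third component of $type(X_J)$, namely $mex$ of the second components of its options, is at least $2$.
\end{itemize}
Therefore $a=mex(B\cup\{0\})=1$.

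Several of the paper's intermediate types in this case are also wrong, for the same reason. For instance, $X_{\mathbb{Z}_{\frac{p-1}{2}}}$ has type $(1,4,3)$, not $(1,3,0)$. The paper's final answer in this case is nonetheless correct.
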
 
\begin{proof} 
Let $p-1=p_1^{a_1}p_2^{a_2}\cdots p_k^{a_k}$, where $p_1, p_2, \cdots, p_k$ are primes such that $p_1 =2$ and $a_1,a_2,\cdots,a_k$ are non-negative integers. Then using the Theorem \ref{fmax}, the set $\mathcal{I}$ of intersection of maximal subgroups is
\[\mathcal{I} = \{\mathbb{Z}_{1}, H_{1}, H_{2}, H_{3}, H_{4}, H_{5}\},\]
where $H_{1} \simeq \mathbb{Z}_{p-1}, H_{2}\simeq \mathbb{Z}_p\rtimes\mathbb{Z}_{\frac{p-1}{p_{i}}}, H_{3}\simeq \mathbb{Z}_{\frac{p-1}{p_{i}}}, H_{4} \simeq \mathbb{Z}_p\rtimes\mathbb{Z}_{\frac{p-1}{\pi}}, H_{5} \simeq \mathbb{Z}_{\frac{p-1}{\pi}}$ and $\pi$ is the product of atleast two primes $p_{i} (1\le i \le k)$. The set of all the structure classes for the game $GEN(F_{p})$ is
\[\mathcal{Y}=\left\{X_{\mathbb{Z}_{1}}, X_{H_{1}},X_{H_{2}},X_{H_{3}}, X_{H_{4}}, X_{H_{5}}, X_{F_p}\right\}.\]
Now we determine the $type$ of these structure classes. First, note that $X_{F_p}$ is terminal structure class with parity 0. Therefore, $type(X_{F_p}) = (0,0,0)$. Now let us determine the $type$ of other structure classes in two cases namely when $4$ divides $(p-1)$ and when $4$ does not divide $(p-1)$.

\textit{Case$(i)$.} Let $a_{1} =1$. Then $p-1=p_1p_2^{a_2}\cdots p_k^{a_k}$. Let $I$ be a semi-terminal structure class. Then we have the following
\begin{itemize}
\item[$(i)$] if $I \in \{\mathbb{Z}_{p-1}, \mathbb{Z}_p\rtimes\mathbb{Z}_{\frac{p-1}{p_j}}\}$, then $type(X_{I}) = (0,1,2)$, where $j\in \{2,3, \cdots, k\}$,
\item[$(ii)$] if $I = \mathbb{Z}_p \rtimes \mathbb{Z}_{\frac{p-1}{2}}$, then $type(X_{I}) = (1,2,1)$,
\item[$(iii)$] if $I = \mathbb{Z}_p \rtimes \mathbb{Z}_{\frac{p-1}{\pi}}$, then for $pty(I) = 0$, we have $Opt(X_{I}) =  \left\{X_{F_{p}}, X_{\mathbb{Z}_p \rtimes \mathbb{Z}_{\frac{p-1}{p_j}}}\right\}$, where $j\in \{2,3, \cdots, k\}$. Therefore, $otype(X_{I}) = \{(0,0,0), (0,1,2)\}$ and so, $type(X_{I}) = (0,1,2)$. If $pty(I) = 1$, then $Opt(X_{I}) = \left\{X_{F_p}, X_{\mathbb{Z}_p \rtimes \mathbb{Z}_{\frac{p-1}{p_j}}}, X_{\mathbb{Z}_p \rtimes \mathbb{Z}_{\frac{p-1}{2}}}\right\}$. Therefore, $otype(X_{I}) = \{(0,0,0), (0,1,2), (1,2,1)\}$ and so, $type(X_{I}) = (1, 4, 3)$.

\end{itemize}
Now, we determine the $type$ of all non-terminal structure classes. Note that, the non-terminal structure classes are given by  $I = \mathbb{Z}_{\frac{p-1}{\pi}}$. If $pty(I) = 0$, then for $j\in \{2,3, \cdots,k\}$
\begin{align*}
\text{either}\; &Opt(X_{I}) = I_{1} = \left\{X_{\mathbb{Z}_p \rtimes \mathbb{Z}_{\frac{p-1}{p_{j}}}}, X_{\mathbb{Z}_{p-1}}  \right\}\\
\text{or}\; &Opt(X_{I}) = I_{2} = I_{1} \cup J,
\end{align*}
where $J = \left\{X_{\mathbb{Z}_{p}\rtimes \mathbb{Z}_{\frac{p-1}{\pi^{\prime}}}}, X_{\mathbb{Z}_{\frac{p-1}{\pi^{\prime}}}} \mid \pi\; \text{divides}\; \pi^{\prime}\right\}$ and parity of members of $J$ is 0. If $Opt(X_{I}) = I_{1}$, then $otype(X_{I}) = \{(0,1,2)\}$ and so, $type(X_{I}) = (0, 0, 2)$. Now, if $Opt(X_{I}) = I_{2}$, then $otype(X_{I}) = \{(0,1,2), (0,0,2)\}$ and so, $type(X_{I}) = (0, 0, 2)$. 

Now, let $pty(I) = 1$. Then for $I = \mathbb{Z}_{\frac{p-1}{2}}$, the $Opt(X_{I}) = \left\{X_{\mathbb{Z}_{p}\rtimes \mathbb{Z}_{\frac{p-1}{2}}}, X_{\mathbb{Z}_{p-1}} \right\}$. Therefore, $otype(X_{I}) = \{(1,2,1), (0,1,2)\}$ and so, $type(X_{I}) = (1, 3, 0)$.

Now, for the non-terminal structure class $X_{I}$, where $I \ne  \mathbb{Z}_{\frac{p-1}{2}}$, we have 
\begin{align*}
\text{either}\; & Opt(X_{I}) = J_{1} = \left\{X_{\mathbb{Z}_{p-1}}, X_{\mathbb{Z}_{\frac{p-1}{2}}},X_{\mathbb{Z}_{\frac{p-1}{p_j}}}, X_{\mathbb{Z}_p\rtimes I}\right\}, \; \text{where}\; j\in \{2,3,\cdots,k\}\\
\text{or}\; &Opt(X_{I}) = J_{2} = J_{1} \bigcup \left\{X_{\mathbb{Z}_{\frac{p-1}{\pi^{\prime}}}} \mid \ \pi\; \text{divides}\; \pi^{\prime}\right\}. 
\end{align*} 
Then, for $Opt(X_{I}) = J_{1}$, $otype(X_{I})= \{(0, 1, 2), (1, 3, 0), (0, 0, 2), (1, 4, 3)\}$ and so, $type(X_{I}) = (1, 1, 2)$. If $Opt(X_{I}) = J_{2}$, then $otype(X_{I}) = \{(0, 1, 2), (1, 3, 0), (0, 0, 2), (1, 4, 3), (1,1,2)\}$ and so, $type(X_{I}) = (1, 1, 2)$.

For the structure class $X_{\mathbb{Z}_{1}}$, we have for $j\in \{2,3, \cdots,k\}$
\[ Opt\left(X_{\mathbb{Z}_{1}}\right) = \left\{X_{\mathbb{Z}_{p-1}}, X_{\mathbb{Z}_{\frac{p-1}{2}}}, X_{\mathbb{Z}_{\frac{p-1}{p_{j}}}}\right\}\bigcup \left\{X_{\mathbb{Z}_{\frac{p-1}{\pi^{\prime}}}}\mid \pi \; \text{divides}\; \pi^{\prime}\right\}.\]
Therefore, $otype(X_{I}) = \{(0, 1, 2), (1, 3, 0), (0, 0, 2), (1,1,2)\}$ and so, $type\left(X_{\mathbb{Z}_{1}}\right) = (1, 1, 2)$. Since the Frattini Subgoup of $F_{P}$ is $\mathbb{Z}_{1}$, $GEN(F_{p}) = *1$.

\textit{Case$(ii)$.} Let $a_1\geq2$. Then $p-1=p_1^{a_1}p_2^{a_2}\cdots p_k^{a_k}$ and each semi-terminal structure class is of parity 0. Therefore, the $type$ of each semi-terminal structure class is $(0,1,2)$. Now, let $X_{I}$ be a non-terminal structure class, where $I \ne \{0\}$. Then $pty(X_{I}) = 0$ and the option set of $X_{I}$ is
\begin{align*}
\text{either}\; &Opt(X_{I}) = J_{1} = \left\{X_{\mathbb{Z}_{p}\rtimes I}, X_{\mathbb{Z}_{p-1}}\right\}\\
\text{or}\; &Opt(X_{I}) = J_{2} = J_{1} \bigcup \left\{X_{\mathbb{Z}_{\frac{p-1}{\pi^{\prime}}}} \mid \pi\; \text{divides}\;  \pi^{\prime}\right\}.
\end{align*}
If $Opt(X_{I}) = J_{1}$, then $otype(X_{I}) = \{(0,1,2)\}$ and so, $type(X_{I}) = (0, 0, 2)$. On the other hand, if $Opt(X_{I}) = J_{2}$, then $otype(X_{I}) = \{(0,1,2), (0, 0, 2)\}$ and so, $type(X_{I}) = (0, 0, 2)$. For the structure class $X_{\mathbb{Z}_{1}}$, we have 
\[Opt\left(X_{\mathbb{Z}_{1}}\right)=X_{\mathbb{Z}_{p-1}} \bigcup \left\{X_{\mathbb{Z}_{\frac{p-1}{\pi^{\prime}}}}\mid \pi \; \text{divides}\; \pi^{\prime}\right\}.\]  
Therefore, the $otype(X_{\mathbb{Z}_{1}}) = \{(0, 1, 2), (0, 0, 2)\}$ and so, $type(X_{\mathbb{Z}_{1}}) = (1, 0, 2)$. Since $\mathbb{Z}_{1}$ is the Frattini subgroup of $F_{p}$, $GEN(F_p) = *0$.
\end{proof}
The structure digraph and the simplified structure diagram for the game $GEN(F_{13})$ and $GEN(F_{19})$ are given as below.
\begin{figure}[h!]
\begin{tikzpicture}
\node (a) at (0,0) {$\underset{(1,0,2)}{\mathbb{Z}_{1}}$};
\node (b) at (0,-1.25) {$\underset{(0,0,2)}{\mathbb{Z}_2}$}                                            edge[<-] (a);
\node (c) at (-2,-2.5) {$\underset{(0,0,2)}{\mathbb{Z}_4}$}                                            edge[<-] (a)   edge[<-] (b);
\node (d) at (0,-2.5) {$\underset{(0,1,2)}{\mathbb{Z}_{13}\rtimes\mathbb{Z}_{2}}$}      edge[<-] (b);
\node (e) at (3,-2.5) {$\underset{(0,0,2)}{\mathbb{Z}_6}$}                                            edge[<-] (a)   edge[<-] (b);
\node (f) at (-2,-4) {$\underset{(0,1,2)}{\mathbb{Z}_{13}\rtimes\mathbb{Z}_{4}}$}       edge[<-] (c)   edge[<-] (d);
\node (g) at (1.05,-4) {$\underset{(0,1,2)}{\mathbb{Z}_{13}\rtimes\mathbb{Z}_{6}}$}       edge[<-] (e)  edge[<-] (d);
\node (h) at (3,-4) {$\underset{(0,1,2)}{\mathbb{Z}_{12}}$}                                        edge[<-] (a)   edge[<-] (b)    edge[<-] (c)   edge[<-] (e);
\node (i) at (0,-5.25) {$\underset{(0,0,0)}{\mathbb{Z}_{13}\rtimes\mathbb{Z}_{12}}$}     edge[<-] (f)    edge[<-] (g)     edge[<-] (h)    edge[<-] (d);

\node (1) at (8,-0.25)	{\begin{tikzpicture}
\draw[thick,-] (0,0) -- (1,0);
\draw[thick, -] (0,0) -- (.5,-1);
\draw[thick, -] (1,0) -- (.5,-1);
\path[fill=gray!20, thick] (0,0) -- (.5,-1) -- (1,0) -- cycle;
\draw[thick, -] (0.2,-.45) -- (.8,-.45) node[pos = .5, above]{$0$};
\draw[thick, -] (0.2,-.45) -- (.8,-.45) node[pos = .5, below]{$2$};
\end{tikzpicture}};
\node (2) at (6,-1.5) {\begin{tikzpicture}
\draw[thick,-] (0,0) -- (1,0);
\draw[thick, -] (0,0) -- (.5,1);
\draw[thick, -] (1,0) -- (.5,1);
\path[fill=gray!20, thick] (0,0) -- (.5,1) -- (1,0) -- cycle;
\draw[thick, -] (0.2,.45) -- (.8,.45) node[pos = .5, above]{$2$};
\draw[thick, -] (0.2,.45) -- (.8,.45) node[pos = .5, below]{$0$};
\end{tikzpicture}};
\node (3) at (8,-2.25) {\begin{tikzpicture}
\draw[thick,-] (0,0) -- (1,0);
\draw[thick, -] (0,0) -- (.5,1);
\draw[thick, -] (1,0) -- (.5,1);
\path[fill=gray!20, thick] (0,0) -- (.5,1) -- (1,0) -- cycle;
\draw[thick, -] (0.2,.45) -- (.8,.45) node[pos = .5, above]{$2$};
\draw[thick, -] (0.2,.45) -- (.8,.45) node[pos = .5, below]{$1$};
\end{tikzpicture}};
\node (4) at (8,-4.25) {\begin{tikzpicture}
\draw[thick,-] (0,0) -- (1,0);
\draw[thick, -] (0,0) -- (.5,1);
\draw[thick, -] (1,0) -- (.5,1);
\path[fill=gray!20, thick] (0,0) -- (.5,1) -- (1,0) -- cycle;
\draw[thick, -] (0.2,.45) -- (.8,.45) node[pos = .5, above]{$0$};
\draw[thick, -] (0.2,.45) -- (.8,.45) node[pos = .5, below]{$0$};
\end{tikzpicture}};
\draw[->] (1) -- (2);
\draw[->] (1) -- (3);
\draw[->] (2) -- (3);
\draw[->] (3) -- (4);
\end{tikzpicture}
\caption{Structure digraph and simplify structure diagram for $GEN(F_{13})$.}
\begin{tikzpicture}
\node (a) at (0,0) {$\underset{(1,1,2)}{\mathbb{Z}_{1}}$};
\node (b) at (0,-1.25) {$\underset{(1,1,2)}{\mathbb{Z}_3}$}                                            edge[<-] (a);
\node (c) at (-2,-2.5) {$\underset{(1,3,0)}{\mathbb{Z}_9}$}                                            edge[<-] (a)   edge[<-] (b);
\node (d) at (0,-2.5) {$\underset{(1,4,3)}{\mathbb{Z}_{19}\rtimes\mathbb{Z}_{3}}$}      edge[<-] (b);
\node (e) at (3,-2.5) {$\underset{(0,0,2)}{\mathbb{Z}_6}$}                                            edge[<-] (a)   edge[<-] (b);
\node (f) at (-2,-3.75) {$\underset{(1,2,1)}{\mathbb{Z}_{19}\rtimes\mathbb{Z}_{9}}$}       edge[<-] (c)   edge[<-] (d);
\node (g) at (1,-3.75) {$\underset{(0,1,2)}{\mathbb{Z}_{19}\rtimes\mathbb{Z}_{6}}$}       edge[<-] (e)   edge[<-] (d) ;
\node (h) at (2.75,-3.75) {$\underset{(0,1,2)}{\mathbb{Z}_{18}}$}                                        edge[<-] (a)   edge[<-] (b)    edge[<-] (c)   edge[<-] (e);
\node (i) at (0,-5) {$\underset{(0,0,0)}{\mathbb{Z}_{19}\rtimes\mathbb{Z}_{18}}$}     edge[<-] (f)    edge[<-] (g)     edge[<-] (h)    edge[<-] (d);

\node (1) at (6.75,0)	{\begin{tikzpicture}
\draw[thick,-] (0,0) -- (1,0);
\draw[thick, -] (0,0) -- (.5,-1);
\draw[thick, -] (1,0) -- (.5,-1);
\path[fill=gray!20, thick] (0,0) -- (.5,-1) -- (1,0) -- cycle;
\draw[thick, -] (0.2,-.45) -- (.8,-.45) node[pos = .5, above]{$1$};
\draw[thick, -] (0.2,-.45) -- (.8,-.45) node[pos = .5, below]{$2$};
\end{tikzpicture}};
\node (2) at (4.75,-1.5)	{\begin{tikzpicture}
\draw[thick,-] (0,0) -- (1,0);
\draw[thick, -] (0,0) -- (.5,-1);
\draw[thick, -] (1,0) -- (.5,-1);
\path[fill=gray!20, thick] (0,0) -- (.5,-1) -- (1,0) -- cycle;
\draw[thick, -] (0.2,-.45) -- (.8,-.45) node[pos = .5, above]{$3$};
\draw[thick, -] (0.2,-.45) -- (.8,-.45) node[pos = .5, below]{$0$};
\end{tikzpicture}};

\node (3) at (6.75,-1.5)	{\begin{tikzpicture}
\draw[thick,-] (0,0) -- (1,0);
\draw[thick, -] (0,0) -- (.5,-1);
\draw[thick, -] (1,0) -- (.5,-1);
\path[fill=gray!20, thick] (0,0) -- (.5,-1) -- (1,0) -- cycle;
\draw[thick, -] (0.2,-.45) -- (.8,-.45) node[pos = .5, above]{$4$};
\draw[thick, -] (0.2,-.45) -- (.8,-.45) node[pos = .5, below]{$3$};
\end{tikzpicture}};

\node (4) at (8.75,-1.5) {\begin{tikzpicture}
\draw[thick,-] (0,0) -- (1,0);
\draw[thick, -] (0,0) -- (.5,1);
\draw[thick, -] (1,0) -- (.5,1);
\path[fill=gray!20, thick] (0,0) -- (.5,1) -- (1,0) -- cycle;
\draw[thick, -] (0.2,.45) -- (.8,.45) node[pos = .5, above]{$2$};
\draw[thick, -] (0.2,.45) -- (.8,.45) node[pos = .5, below]{$0$};
\end{tikzpicture}};

\node (5) at (5.5,-3)	{\begin{tikzpicture}
\draw[thick,-] (0,0) -- (1,0);
\draw[thick, -] (0,0) -- (.5,-1);
\draw[thick, -] (1,0) -- (.5,-1);
\path[fill=gray!20, thick] (0,0) -- (.5,-1) -- (1,0) -- cycle;
\draw[thick, -] (0.2,-.45) -- (.8,-.45) node[pos = .5, above]{$2$};
\draw[thick, -] (0.2,-.45) -- (.8,-.45) node[pos = .5, below]{$1$};
\end{tikzpicture}};

\node (6) at (8,-3) {\begin{tikzpicture}
\draw[thick,-] (0,0) -- (1,0);
\draw[thick, -] (0,0) -- (.5,1);
\draw[thick, -] (1,0) -- (.5,1);
\path[fill=gray!20, thick] (0,0) -- (.5,1) -- (1,0) -- cycle;
\draw[thick, -] (0.2,.45) -- (.8,.45) node[pos = .5, above]{$2$};
\draw[thick, -] (0.2,.45) -- (.8,.45) node[pos = .5, below]{$1$};
\end{tikzpicture}};

\node (7) at (6.75,-4.5) {\begin{tikzpicture}
\draw[thick,-] (0,0) -- (1,0);
\draw[thick, -] (0,0) -- (.5,1);
\draw[thick, -] (1,0) -- (.5,1);
\path[fill=gray!20, thick] (0,0) -- (.5,1) -- (1,0) -- cycle;
\draw[thick, -] (0.2,.45) -- (.8,.45) node[pos = .5, above]{$0$};
\draw[thick, -] (0.2,.45) -- (.8,.45) node[pos = .5, below]{$0$};
\end{tikzpicture}};

\draw[->] (1) -- (2);
\draw[->] (1) -- (3);
\draw[->] (1) -- (4);
\draw[->] (1) -- (6);
\draw[->] (2) -- (5);
\draw[->] (2) -- (6);
\draw[->] (3) -- (5);
\draw[->] (3) -- (6);
\draw[->] (3) -- (7);
\draw[->] (4) -- (6);
\draw[->] (5) -- (7);
\draw[->] (6) -- (7);
\end{tikzpicture}
\caption{Structure digraph and simplify structure diagram for $GEN(F_{19})$.}
\end{figure}

\newpage
\section{Non-abelian group with all abelian subgroups}\label{sec4}
In this section, we study the achievement game and the avoidance game on finite non-abelian groups $G_\mathcal{A}$, in which all subgroups are abelian. We have some familiar examples of such groups as the symmetric group $S_3$ of order 6 and the Quaternion group $Q_8$ of order 8. Following are some results which will help us to understand the maximal subgroups of such groups.

\begin{theorem} \cite[{\S}{1}, p. 399]{5} 
The order of the group $G_\mathcal{A}$ has at most two distinct prime factors.  \end{theorem}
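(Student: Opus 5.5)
The plan is to treat $G_\mathcal{A}$ as a minimal non-abelian group and proceed in two stages. First, show that $G=G_\mathcal{A}$ has a normal maximal subgroup of prime index. Second, use this normal subgroup to show that if three or more primes divided $|G|$, then $G$ would be abelian.

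\emph{Step 1: structure of the maximal subgroups.} Every maximal subgroup $M$ of $G$ is proper, hence abelian.
\begin{itemize}
\item If $M\neq N$ are maximal, then $M\cap N$ is centralized by both $M$ and $N$, so by $\langle M,N\rangle=G$. Thus $M\cap N\le Z(G)$.
\item Conversely, $Z(G)\le M$ for every maximal $M$. Otherwise $G=MZ(G)$ with $M$ abelian, which makes $G$ abelian.
\item Hence $M\cap N=Z(G)$ for distinct maximal subgroups. Passing to $\bar G=G/Z(G)$, distinct maximal subgroups of $\bar G$ intersect trivially.
\item $\bar G$ is not cyclic, since $G$ is non-abelian. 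So every element of $\bar G$ lies in some maximal subgroup.
\end{itemize}

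Suppose no maximal subgroup of $\bar G$ were normal. Then each maximal $\bar M$ is self-normalizing, and its conjugates pairwise meet trivially. I would then run the standard counting argument. The conjugates of $\bar M$ cover exactly $|\bar G:\bar M|(|\bar M|-1)+1>|\bar G|/2$ elements. So two non-conjugate classes of maximal subgroups would cover more than $|\bar G|$ elements, which is impossible. Therefore there is only one conjugacy class, and its conjugates cover all of $\bar G$. But no finite group is the union of the conjugates of a proper subgroup, so this is also impossible.

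Consequently some maximal subgroup $\bar M$ is normal in $\bar G$, and its preimage $M$ is normal in $G$. A normal maximal subgroup of a finite group has prime index, say $|G:M|=q$. I expect this step to be the main obstacle, mainly in getting the counting inequality and the ``union of conjugates'' fact stated cleanly.

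\emph{Step 2: bounding the primes.} Assume, for contradiction, that at least three distinct primes divide $|G|$. Choose a $q$-element $x\in G\setminus M$; one exists because $|G:M|=q$.
\begin{itemize}
\item For each prime $r\neq q$, the Sylow $r$-subgroup $P_r$ of $M$ is a Sylow $r$-subgroup of $G$. It is characteristic in the abelian normal subgroup $M$, so $P_r\trianglelefteq G$.
\item The subgroup $P_r\langle x\rangle$ has order divisible only by $r$ and $q$. With three primes present it is proper, hence abelian, so $x$ centralizes every $P_r$ with $r\neq q$.
\item A Sylow $q$-subgroup $Q$ of $G$ is proper, hence abelian. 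Since $M$ is abelian, $M=Q_M\times\prod_{r\neq q}P_r$ where $Q_M=Q\cap M$, and $Q=Q_M\langle x\rangle$.
\item Both $Q_M$ (which lies in $M$) and $x$ centralize all $P_r$, so $Q$ centralizes all $P_r$.
\end{itemize}
Finally, $G=Q\prod_{r\neq q}P_r$, where $Q$ is abelian, the $P_r$ are abelian and pairwise commute inside $M$, and $Q$ commutes with each $P_r$. So $G$ is abelian, a contradiction. Hence $|G_\mathcal{A}|$ has at most two distinct prime factors.
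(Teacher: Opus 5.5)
The paper gives no proof of this statement; it is quoted from Miller and Moreno, so there is no argument in the paper to compare yours with. Your proof is correct and self-contained. In Step~1, two distinct maximal subgroups meet in a subgroup centralized by $\langle M,N\rangle=G$, and $Z(G)$ lies in every maximal subgroup. Hence the maximal subgroups of $\bar G=G/Z(G)$ intersect pairwise trivially, so a non-normal one is a self-normalizing TI subgroup and the counting applies. Step~2 is a clean Sylow argument that uses only an abelian normal subgroup of prime index.

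Two pieces of bookkeeping should be tightened.

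First, the bound ``each class covers more than $|\bar G|/2$ elements'' does not by itself rule out two classes. The identity is shared, so it only gives a union of at least $|\bar G|$ elements. Count non-identity elements instead. Put $n=|\bar G:\bar M|$. Then $n\le|\bar G|/2$, because $\bar M\neq 1$ (otherwise $\bar G$ would have prime order). So the conjugates of $\bar M$ contain exactly $|\bar G|-n\ge|\bar G|/2$ non-identity elements. These sets are disjoint for different classes, while $\bar G$ has only $|\bar G|-1$ non-identity elements. The same formula settles the one-class case directly, since $|\bar G|-n+1<|\bar G|$ for $n\ge 2$. So you need not quote the general fact about unions of conjugates.

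Second, in Step~2 choose $Q$ to be a Sylow $q$-subgroup containing your $x$. Then $Q\cap M$ is a Sylow $q$-subgroup of $M$ (as $M$ is normal) of index $q$ in $Q$, and $Q=(Q\cap M)\langle x\rangle$ follows.

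As for the route, yours is the classical elementary one, in the spirit of the original Miller--Moreno paper. Maximal subgroups meet in the centre, a count in $G/Z(G)$ forces a normal maximal subgroup of prime index, and Sylow subgroups finish the job.

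The usual modern proof splits instead into two cases. If $G$ is nilpotent, it is the direct product of its Sylow subgroups, each proper and hence abelian unless $G$ is a $p$-group. If $G$ is not nilpotent, all proper subgroups are abelian and hence nilpotent, so $G$ is a minimal non-nilpotent (Schmidt) group, and Schmidt's theorem gives $|G|=p^aq^b$. That route yields more structure: a normal Sylow $p$-subgroup and a cyclic Sylow $q$-subgroup. But it rests on Schmidt's theorem, whose proof is more involved. Your argument obtains exactly the prime-count bound using nothing beyond Sylow's theorems and counting.
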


\begin{theorem} \cite[{\S}{1}, p. 399--402]{5}\label{abmax}
Let the order of the group $G_\mathcal{A}$ have two distinct prime factors $p,q$ such that $|G_\mathcal{A}|=p^\alpha q^\beta$, where $p< q$ and $\alpha,\beta$ are positive integers. Then the maximal subgroups are 
\begin{itemize}
\item[$(i)$] $q^\beta$ cyclic subgroups of order $p^\alpha$,  
\item[$(ii)$] a subgroup $P_{\alpha-1}\times Q_\beta$ of order $p^{\alpha-1}q^\beta$, where $P_{\alpha-1}$ is a cyclic subgroup of $G_\mathcal{A}$ of order $p^{\alpha-1}$ and $Q_\beta$ is the subgroup of $G_\mathcal{A}$ of order $q^\beta$.
\end{itemize} 
\end{theorem}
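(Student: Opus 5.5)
The plan is to derive the statement from the structure theory of minimal non-abelian groups (Miller--Moreno), using only coprime action and Burnside's transfer theorem. Write $G=G_\mathcal{A}$. Every proper subgroup of $G$ is abelian and $G$ is not.

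\emph{Step 1: one Sylow subgroup is normal.} For each Sylow subgroup $S$ of $G$, either $S\trianglelefteq G$ or $N_G(S)$ is proper. In the second case $N_G(S)$ is abelian, so $N_G(S)=C_G(S)$. Burnside's normal complement theorem then gives a normal complement to $S$, which is the other Sylow subgroup.

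If neither Sylow subgroup were normal, each would have a normal complement, namely the other Sylow subgroup. That other subgroup would then be normal, a contradiction. So exactly one Sylow subgroup is normal; it cannot be both, since then $G$ would be the direct product of two abelian groups. Call the normal one $Q$ and the other $P$, so $G=Q\rtimes P$.

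\emph{Step 2: structure of $P$ and $Q$.}
\begin{itemize}
\item[$(a)$] $P$ is cyclic. If $P=\langle x\rangle$ fails, every maximal subgroup $P_0$ of $P$ gives a proper, hence abelian, subgroup $QP_0$. The maximal subgroups then generate $P$, so $P$ centralizes $Q$ and $G$ is abelian, a contradiction. So $P=\langle x\rangle$, and $P_{\alpha-1}=\langle x^p\rangle$ centralizes $Q$ because $QP_{\alpha-1}$ is proper. Hence $P_{\alpha-1}\le Z(G)$.
\item[$(b)$] $C_Q(P)=1$. By coprime action, $Q=C_Q(P)\times[Q,P]$. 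If $C_Q(P)\neq 1$, then $[Q,P]P$ is proper, hence abelian, which forces $[Q,P]=1$ and makes $G$ abelian.
\item[$(c)$] $Q$ is minimal normal. If $Q_0<Q$ is $P$-invariant, then $Q_0P$ is proper and abelian, so $Q_0\le C_Q(P)=1$. Hence $Q$ is elementary abelian and $P$ acts irreducibly on it.
\end{itemize}

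\emph{Step 3: list the maximal subgroups.} Let $M$ be a maximal subgroup.
\begin{itemize}
\item[$(a)$] If $Q\le M$, then $M=QP_1$ with $P_1$ maximal in the cyclic group $P$. So $M=Q\times P_{\alpha-1}$, a direct product because $P_{\alpha-1}$ is central. This is the unique such subgroup.
\item[$(b)$] If $Q\not\le M$ and $|M|_p<p^\alpha$, then $M\le Q\times P_{\alpha-1}$, so $M$ is not maximal. Hence $M$ contains a Sylow $p$-subgroup $P^g$.
\item[$(c)$] In case $(b)$, $M\cap Q$ is normalized by $P^g$ and, since $M$ is abelian, centralized by $M$. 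It is also centralized by $Q$, because $Q$ is abelian. So $M\cap Q$ is normal in $G=QP^g$, and Step 2$(c)$ gives $M\cap Q=1$. Thus $M=P^g$ is cyclic.
\item[$(d)$] These conjugates are maximal, since the only subgroups containing $P$ are $P$, $G$, and $QP_1$-type subgroups not containing $P$. Their number is $|G:N_G(P)|$. We have $N_G(P)=P\,N_Q(P)$, and $N_Q(P)\le C_Q(P)=1$ because $[N_Q(P),P]\le Q\cap P=1$. So there are exactly $|Q|=q^\beta$ of them.
\end{itemize}

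\emph{Main obstacle: the labelling $p<q$.} The statement names the smaller prime $p$, but the argument shows that $p$ must be the prime of the non-normal cyclic Sylow subgroup. In fact $p$ divides $q^\beta-1$ and $\beta$ is the order of $q$ mod $p$, which does not force $p<q$. The group $A_4$ is an example: its proper subgroups are $V_4$, the cyclic subgroups of order $3$ and of order $2$, and the trivial subgroup, all abelian, yet there the normal Sylow subgroup is the $2$-subgroup. I would therefore state and prove the result with $q$ the prime of the normal Sylow subgroup, and flag that ``$p<q$'' should be read as this convention. In the examples of interest, such as $S_3$ and $\mathbb{Z}_7\rtimes\mathbb{Z}_3$, the two conventions coincide.
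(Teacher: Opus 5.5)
The paper gives no proof of this statement; it imports it from the cited Miller--Moreno classification of minimal non-abelian groups. Your proposal is a correct, self-contained proof of that classification by a different, modern route:
\begin{itemize}
\item Burnside's normal $p$-complement theorem applies because a non-normal Sylow subgroup has a proper, hence abelian, normalizer. This forces exactly one normal Sylow subgroup $Q$.
\item Applying the hypothesis to the proper subgroups $QP_0$, $[Q,P]P$ and $Q_0P$ gives three facts: $P$ is cyclic with $P_{\alpha-1}\le Z(G)$, $C_Q(P)=1$, and $Q$ is minimal normal.
\item Splitting on whether $Q\le M$ lists the maximal subgroups, and $N_G(P)=P$ gives the count $q^\beta$.
\end{itemize}
What your route buys over the bare citation is that it makes visible which prime plays which role.

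Your objection to $p<q$ is correct: the statement as printed is false. For $A_4$, with $p=2$, $\alpha=2$, $q=3$, $\beta=1$, it predicts cyclic subgroups of order $4$ and a subgroup of order $6$, and neither exists. Re-reading $p$ as the prime of the cyclic Sylow subgroup, as you propose, is the right fix. It is not merely cosmetic, because the mislabelling propagates. Read literally, Corollary~\ref{cor1} would give $|\Phi(A_4)|=2$, although $\Phi(A_4)=1$. The later propositions would also put $A_4$ in the case $p=2$, $q\neq 2$, $\alpha\neq 1$ and predict $*0$. A direct structure-class computation instead gives $DGN(A_4)=GEN(A_4)=*3$, which is what the ``otherwise'' case yields under your convention.

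One local repair is needed. The maximality sentence in Step~3$(d)$ contradicts itself as written. Argue instead that if $P\le H\le G$, then $H\cap Q$ is normalized by $P$ and by the abelian group $Q$. So $H\cap Q\in\{1,Q\}$ by Step~2$(c)$, and hence $H\in\{P,G\}$.
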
 

\begin{corollary}  \cite[\S 1, p. 399--402]{5}\label{cor1}
Let the order of the group $G_\mathcal{A}$ be $p^\alpha q^\beta$, where $p<q$, $p$ and $q$ are distinct primes and $\alpha,\beta$ are positive integers. Then the Frattini subgroup of group $G_\mathcal{A}$ is a cyclic subgroup of order $p^{\alpha-1}$.
\end{corollary}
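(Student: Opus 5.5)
The statement to prove is Corollary~\ref{cor1}: $\Phi(G_\mathcal{A})$ is cyclic of order $p^{\alpha-1}$. The plan is to compute $\Phi(G_\mathcal{A})$ directly from its definition as the intersection of all maximal subgroups, using the list in Theorem~\ref{abmax}.

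\textbf{Step 1: the Sylow $p$-subgroups.} Fix a maximal subgroup $P_{\alpha-1}\times Q_\beta$ of type $(ii)$. By Theorem~\ref{abmax}$(i)$, the maximal subgroups of type $(i)$ are the $q^\beta$ cyclic subgroups of order $p^\alpha$. These are exactly the Sylow $p$-subgroups. Since there are $q^\beta=|G_\mathcal{A}:N(P)|$ of them, a Sylow $p$-subgroup $P$ is self-normalizing. In particular $P$ is not normal, as $G_\mathcal{A}$ is non-abelian and $q^\beta>1$.

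\textbf{Step 2: the common subgroup $P_{\alpha-1}$.} Each Sylow $p$-subgroup $P_i$ is cyclic of order $p^\alpha$, so it has a unique subgroup $(P_i)_{\alpha-1}$ of order $p^{\alpha-1}$. I would show that this subgroup is central and equals $P_{\alpha-1}$ for every $i$. The argument uses that $P_{\alpha-1}\times Q_\beta$ has index $p$, hence is normal. Then $P_i\cap (P_{\alpha-1}\times Q_\beta)$ has order $p^{\alpha-1}$ and is the unique $p$-Sylow subgroup of the abelian normal subgroup $P_{\alpha-1}\times Q_\beta$. It is therefore independent of $i$ and equals $P_{\alpha-1}$.

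\textbf{Step 3: computing the intersection.} By Step 2, every maximal subgroup contains $P_{\alpha-1}$, so $P_{\alpha-1}\subseteq\Phi(G_\mathcal{A})$. Conversely, take two distinct Sylow $p$-subgroups $P_1\neq P_2$; these exist because $q^\beta\ge q>1$. Their intersection is a proper subgroup of the cyclic group $P_1$ containing $P_{\alpha-1}$. Since $P_{\alpha-1}$ has index $p$ in $P_1$, the intersection is exactly $P_{\alpha-1}$. Hence $\Phi(G_\mathcal{A})\subseteq P_1\cap P_2=P_{\alpha-1}$, giving equality. This subgroup is cyclic of order $p^{\alpha-1}$, being a subgroup of a cyclic group.

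\textbf{Expected obstacle.} The main difficulty is Step 2: justifying that the index-$p$ subgroup $P_{\alpha-1}\times Q_\beta$ is normal, and hence that all Sylow $p$-subgroups share the same subgroup of order $p^{\alpha-1}$. I would argue this via the fact that $p$ is the smallest prime dividing $|G_\mathcal{A}|$, so any subgroup of index $p$ is normal. Alternatively, one can use $Q_\beta\trianglelefteq G_\mathcal{A}$, which follows from the Sylow counts in Theorem~\ref{abmax}, together with $P_{\alpha-1}=P\cap C(Q_\beta)$.
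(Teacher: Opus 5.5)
Your argument is correct relative to Theorem~\ref{abmax} and is essentially the paper's route. The paper gives no separate proof; it reads $\Phi(G_\mathcal{A})$ off as the intersection of the maximal subgroups listed in Theorem~\ref{abmax}. Your Steps~2--3 supply the detail it leaves implicit: each cyclic maximal subgroup meets $N=P_{\alpha-1}\times Q_\beta$ exactly in $P_{\alpha-1}$, and any two distinct ones meet exactly there.

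One caution: the hypothesis $p<q$ inherited from Theorem~\ref{abmax} is not right in general. $A_4$ has order $2^2\cdot 3$ and only abelian proper subgroups, yet it has no element of order $4$ and $\Phi(A_4)=1$. The correct labelling makes $p$ the prime whose Sylow subgroups are cyclic, and that prime can exceed $q$. So it is better not to rest Step~2 on the smallest-prime lemma. Normality is unnecessary anyway: $|P_i|\,|N|/|P_i\cap N|=|P_iN|\le|G_\mathcal{A}|$ gives $|P_i\cap N|\ge p^{\alpha-1}$, and hence $P_i\cap N=P_{\alpha-1}$.
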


\begin{theorem} \cite[\S 1, p. 402--404 ]{5}\label{p+1} 
Let group $G_\mathcal{A}$ be such that $|G_\mathcal{A}|=p^\alpha$ for some prime $p$. Then $G_\mathcal{A}$ has $p+1$ maximal subgroups of order $p^{\alpha-1}$.  
\end{theorem}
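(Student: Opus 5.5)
Throughout I read ``all subgroups abelian'' as ``all \emph{proper} subgroups abelian'' (the group itself being non-abelian); this is forced by the examples $S_3$ and $Q_8$ given above. The plan is to reduce the count of maximal subgroups to a count of hyperplanes in the Frattini quotient $G_\mathcal{A}/\Phi(G_\mathcal{A})$. Then I pin down the dimension of that quotient using the hypothesis.

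First, I recall the standard facts for a finite $p$-group $G=G_\mathcal{A}$ of order $p^\alpha$. Every maximal subgroup $M$ is normal of index $p$, so $|M|=p^{\alpha-1}$; this already gives the order claim in the statement. Moreover $\Phi(G)=G^p[G,G]$, and $V=G/\Phi(G)$ is an elementary abelian $p$-group, i.e.\ a vector space over $\mathbb{F}_p$ of some dimension $d$. By Burnside's basis theorem, $d$ equals the minimal number of generators of $G$. Maximal subgroups of $G$ contain $\Phi(G)$ and correspond bijectively to maximal subgroups of $V$, i.e.\ hyperplanes of $V$. There are exactly $(p^d-1)/(p-1)$ of these.

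Next, I show $d=2$. If $d=1$, then $G$ is cyclic by the basis theorem, contradicting that $G$ is non-abelian; hence $d\ge 2$. For the upper bound, choose two non-commuting elements $x,y\in G$, which exist since $G$ is non-abelian. The subgroup $\langle x,y\rangle$ is non-abelian, so it cannot be a proper subgroup, because every proper subgroup is abelian. Therefore $G=\langle x,y\rangle$ and $d\le 2$. Substituting $d=2$ gives $(p^2-1)/(p-1)=p+1$ maximal subgroups, each of order $p^{\alpha-1}$.

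The only step requiring real input is the Burnside basis theorem, namely the identification of maximal subgroups with hyperplanes of $G/\Phi(G)$. This is the step I expect to be the main obstacle. I would either cite it or prove it quickly: in a $p$-group every maximal subgroup is normal of index $p$ (by the normalizer-grows property), so each maximal subgroup contains $G^p[G,G]$. Conversely, every hyperplane of $G/G^p[G,G]$ pulls back to a maximal subgroup. The rest of the argument is the elementary count above.
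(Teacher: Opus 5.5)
Your proof is correct. The paper gives no argument for this statement; it is quoted from Miller and Moreno \cite{5}. So your proposal is an independent, self-contained route rather than a reconstruction of something in the text.

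You use the hypothesis exactly once, to see that any two non-commuting elements generate $G_\mathcal{A}$. Hence $G_\mathcal{A}$ is $2$-generated but not cyclic, and the Burnside basis theorem converts $d=2$ into the hyperplane count $(p^2-1)/(p-1)=p+1$.

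The citation buys brevity. Your route buys more information, and information the paper needs later. It gives $|G_\mathcal{A}:\Phi(G_\mathcal{A})|=p^2$, hence $|\Phi(G_\mathcal{A})|=p^{\alpha-2}$. This is non-trivial because a non-abelian $p$-group has order at least $p^3$. The paper establishes this fact separately, by a counting argument over the three maximal subgroups, in the proof of Proposition~\ref{dngp} (for $p=2$). It then reuses it in the achievement-game proposition for $p$-groups. With your argument it comes for free.

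If you want to bypass the basis theorem, an equally short variant works through the center:
\begin{enumerate}
\item Take two distinct maximal subgroups $M_1,M_2$; they exist since $G_\mathcal{A}$ is not cyclic.
\item Both are abelian and normal of index $p$ with $M_1M_2=G_\mathcal{A}$, so $M_1\cap M_2$ is central of index $p^2$.
\item Since $G_\mathcal{A}/Z(G_\mathcal{A})$ cannot be cyclic, $Z(G_\mathcal{A})=M_1\cap M_2$.
\item Hence the center lies in every maximal subgroup, and the maximal subgroups correspond to the $p+1$ subgroups of order $p$ in $G_\mathcal{A}/Z(G_\mathcal{A})\cong\mathbb{Z}_p\times\mathbb{Z}_p$.
\end{enumerate}
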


\begin{proposition} Let group $G_\mathcal{A}$ be such that $|G_\mathcal{A}|=p^\alpha q^\beta$, where $p<q$, $p, q$ are two distinct primes, and $\alpha, \beta$ are positive integers. Then
\[  DGN(G_\mathcal{A}) =\left\{
\begin{array}{ll}
*0, &\text{if}\; p=2, q\neq2, \alpha\neq1 \\
*1, &\text{if}\; p\neq 2\neq q \\
*3, &	 otherwise
\end{array} 
\right. \]
\end{proposition}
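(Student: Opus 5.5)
The plan is to split according to the three cases in the statement. In the first two cases an earlier general proposition applies directly. In the remaining case I will compute the structure digraph explicitly, using Theorem \ref{abmax} and Corollary \ref{cor1}. Since $p<q$, the prime $q$ is always odd, so ``otherwise'' means exactly $p=2$ and $\alpha=1$.

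\emph{Case $p\neq 2$.} Both primes are odd, so $|G_\mathcal{A}|=p^\alpha q^\beta$ is odd. The group is non-trivial, so Proposition 3.22 of \cite{1}, recalled in Section \ref{sub2}, gives $DGN(G_\mathcal{A})=*1$.

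\emph{Case $p=2$, $\alpha\geq 2$.} By Corollary \ref{cor1}, $\Phi(G_\mathcal{A})$ is cyclic of order $2^{\alpha-1}$, which is even. Proposition 3.23 of \cite{1} then gives $DGN(G_\mathcal{A})=*0$.

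\emph{Case $p=2$, $\alpha=1$.} Here $\Phi(G_\mathcal{A})=\mathbb{Z}_1$ by Corollary \ref{cor1}. By Theorem \ref{abmax}, the maximal subgroups are of two kinds:
\begin{itemize}
\item[$(i)$] $q^\beta$ cyclic subgroups of order $2$;
\item[$(ii)$] the subgroup $P_0\times Q_\beta=Q_\beta$ of order $q^\beta$, since $P_0$ is trivial.
\end{itemize}

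Next I determine $\mathcal{I}$. Two distinct subgroups of order $2$ intersect trivially. A subgroup of order $2$ and $Q_\beta$ have coprime orders, so they also intersect trivially. Hence
\[\mathcal{I}=\{\mathbb{Z}_1\}\cup\mathcal{M}.\]
In particular, every maximal subgroup is an option of $X_{\mathbb{Z}_1}$, and there are no intermediate structure classes.

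The types of the terminal classes follow from their parity:
\begin{itemize}
\item[$\bullet$] each order-$2$ class is even, so it has type $(0,0,1)$;
\item[$\bullet$] $X_{Q_\beta}$ is odd, so it has type $(1,1,0)$.
\end{itemize}
Therefore $otype(X_{\mathbb{Z}_1})=\{(0,0,1),(1,1,0)\}$, so $A=B=\{0,1\}$. Since $pty(\mathbb{Z}_1)=1$, the recursive formulas give
\[b=mex(A)=2,\qquad a=mex(B\cup\{2\})=3,\]
so $type(X_{\mathbb{Z}_1})=(1,3,2)$. This is exactly the computation in the $\mathbb{Z}_6$ example. The nim-value of the game is the second component of $type(X_{\Phi(G_\mathcal{A})})$, so $DGN(G_\mathcal{A})=*3$.

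The main point needing care is the identification of $\mathcal{I}$ in the last case. One must check that the subgroup in item $(ii)$ of Theorem \ref{abmax} collapses to $Q_\beta$ when $\alpha=1$, and that no nontrivial intersections arise. Both follow from the coprimality and prime-order arguments above.
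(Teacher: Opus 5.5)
\textbf{There is a genuine gap in your second case, and it makes the printed statement false.} Your three cases and the computation in each match the paper's proof. Your explicit check that $\mathcal{I}=\{\mathbb{Z}_1\}\cup\mathcal{M}$ when $\alpha=1$ is more careful than the paper's.

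The gap comes from Theorem \ref{abmax} and Corollary \ref{cor1} as stated here. In a non-abelian group of order $p^\alpha q^\beta$ whose proper subgroups are all abelian, one Sylow subgroup is cyclic and non-normal, and the other is normal and elementary abelian. The Frattini subgroup lies inside the cyclic one. Nothing forces the cyclic Sylow subgroup to belong to the smaller prime, so the normalization $p<q$ is not legitimate.

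$A_4$ is a concrete counterexample:
\begin{itemize}
\item Its proper subgroups are $1$, $\mathbb{Z}_2$, $\mathbb{Z}_3$ and $V_4$, all abelian, and $|A_4|=2^2\cdot 3$. So your Case 2 applies with $p=2$, $\alpha=2$.
\item Its maximal subgroups are $V_4$ and four copies of $\mathbb{Z}_3$, so $\Phi(A_4)=V_4\cap\mathbb{Z}_3=1$. It is not a group of order $2^{\alpha-1}=2$.
\item The structure digraph has the same shape as in your third case: $X_{V_4}$ has type $(0,0,1)$ and each $X_{\mathbb{Z}_3}$ has type $(1,1,0)$. Hence $type(X_{\mathbb{Z}_1})=(1,3,2)$ and $DGN(A_4)=*3$, not $*0$.
\end{itemize}
So the step ``$\Phi(G_\mathcal{A})$ is cyclic of order $2^{\alpha-1}$'' fails, and the proposition as printed (with $p<q$) is false.

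\textbf{How to repair it.} Let $p$ denote the prime whose Sylow subgroup $P=\langle x\rangle$ is cyclic, with no order relation between $p$ and $q$.
\begin{itemize}
\item $\Phi(G_\mathcal{A})=\langle x^{p}\rangle$ has order $p^{\alpha-1}$.
\item The maximal subgroups are the $q^\beta$ conjugates of $P$ together with $\Phi(G_\mathcal{A})\times Q_\beta$.
\item Any two of them intersect exactly in $\Phi(G_\mathcal{A})$. A proper subgroup of the cyclic group $P^g$ containing the index-$p$ subgroup $\Phi(G_\mathcal{A})$ equals it, and the $p$-part of the abelian group $\Phi(G_\mathcal{A})\times Q_\beta$ is $\Phi(G_\mathcal{A})$. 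Hence $\mathcal{I}=\{\Phi(G_\mathcal{A})\}\cup\mathcal{M}$.
\end{itemize}
With this labeling your first two cases go through verbatim. The ``otherwise'' case now means either $p=2$, $\alpha=1$, or $q=2$ with $p$ odd. In both subcases $\Phi(G_\mathcal{A})$ has odd order and $\mathcal{M}$ contains both an even-order and an odd-order member. So $otype(X_{\Phi(G_\mathcal{A})})=\{(0,0,1),(1,1,0)\}$, and your $(1,3,2)$ computation gives $*3$.

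In particular, your remark that ``otherwise'' forces $p=2$ and $\alpha=1$ is exactly where the hypothesis $p<q$ goes wrong. The explicit condition $q\neq 2$ in the statement suggests that $q=2$ was meant to be allowed.
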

\begin{proof}
Using the Theorem \ref{abmax}, the set $\mathcal{I}$ of intersection subgroups is given by
\[\mathcal{I}=\{P_{\alpha-1},P_\alpha,P_{\alpha-1}\times Q_\beta\}.\]

\textit{Case$(i)$.} Let $p\neq2\neq q$. Then the order of the group $G_\mathcal{A}$ is odd. Thus using \cite[Proposition 3.22, p. 11]{1}, we have $DGN(G_\mathcal{A})=*1$.

\textit{Case$(ii)$.} Let $p=2,q\neq2$ and $\alpha\neq1$. Then using the Corollary \ref{cor1}, we get the order of the Frattini subgroup $\Phi(G_\mathcal{A})$ is even. Thus by \cite[Proposition 3.23, p. 521]{1}, $DGN(G_\mathcal{A})=*0$.

\textit{Case$(iii)$.} Let $p=2$, $q\neq2$ and $\alpha=1$. Then $|G_{\mathcal{A}}| = 2q^{\beta}$ and $\mathcal{I}=\{\mathbb{Z}_1, P_{1} = \mathbb{Z}_{2},Q_\beta\}$. Now $P_1$ and $Q_\beta$ are maximal subgroups. Thus $type(X_{P_1})=(0,0,1)$ and $type(X_{Q_\beta})=(1,1,0)$. Note that $Opt(X_{\mathbb{Z}_1})=\{X_{P_1},X_{Q_\beta}\}$. This implies that $otype(X_{\mathbb{Z}_1})=\{(0,0,1),(1,1,0)\}$ and so, $type(X_{\mathbb{Z}_1})=(1,3,2)$. As $\mathbb{Z}_1$ is the Frattini subgroup, we have $DGN(G_\mathcal{A})=*3$.
\begin{figure}[h!]
\begin{center}
\begin{tikzpicture}
\node (a) at (7,0)	{\begin{tikzpicture}
\draw[thick,-] (0,0) -- (1,0);
\draw[thick, -] (0,0) -- (.5,-1);
\draw[thick, -] (1,0) -- (.5,-1);
\path[fill=gray!20, thick] (0,0) -- (.5,-1) -- (1,0) -- cycle;
\draw[thick, -] (0.2,-.45) -- (.8,-.45) node[pos = .5, above]{$3$};
\draw[thick, -] (0.2,-.45) -- (.8,-.45) node[pos = .5, below]{$2$};
\end{tikzpicture}};
\node (b) at (8.25,-1.5) {\begin{tikzpicture}
\draw[thick,-] (0,0) -- (1,0);
\draw[thick, -] (0,0) -- (.5,1);
\draw[thick, -] (1,0) -- (.5,1);
\path[fill=gray!20, thick] (0,0) -- (.5,1) -- (1,0) -- cycle;
\draw[thick, -] (0.2,.45) -- (.8,.45) node[pos = .5, above]{$1$};
\draw[thick, -] (0.2,.45) -- (.8,.45) node[pos = .5, below]{$0$};
\end{tikzpicture}};

\node (c) at (5.75,-1.5)	{\begin{tikzpicture}
\draw[thick,-] (0,0) -- (1,0);
\draw[thick, -] (0,0) -- (.5,-1);
\draw[thick, -] (1,0) -- (.5,-1);
\path[fill=gray!20, thick] (0,0) -- (.5,-1) -- (1,0) -- cycle;
\draw[thick, -] (0.2,-.45) -- (.8,-.45) node[pos = .5, above]{${1}$};
\draw[thick, -] (0.2,-.45) -- (.8,-.45) node[pos = .5, below]{$0$};
\end{tikzpicture}};
\draw[->] (a) -- (b);
\draw[->] (a) -- (c);
\end{tikzpicture}
\end{center}
\caption{Structure diagram of $DGN(G_{\mathcal{A}})$.}
\end{figure}

\end{proof}

\begin{proposition}\label{dngp}
Let $G_\mathcal{A}$ be a group such that $|G_\mathcal{A}| = p^\alpha$ for some prime $p$. Then
\[  DGN(G_\mathcal{A}) =\left\{
\begin{array}{ll}
*0, & p=2 \\
*1, & p\neq 2 
\end{array} 
\right. .\] 
\end{proposition}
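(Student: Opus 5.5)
The plan is to split according to the parity of $p$ and, in each case, reduce to one of the two general results of Ernst and Sieben recalled in Subsection~\ref{sub2}. Neither case should require computing the structure digraph.

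\emph{Case $p\neq 2$.} Here $|G_\mathcal{A}|=p^\alpha$ is odd, and $G_\mathcal{A}$ is non-trivial since it is non-abelian. So \cite[Proposition 3.22, p. 11]{1} gives $DGN(G_\mathcal{A})=*1$ directly.

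\emph{Case $p=2$.} I intend to show that $\Phi(G_\mathcal{A})$ has even order and then apply \cite[Proposition 3.23, p. 11]{1} to get $DGN(G_\mathcal{A})=*0$. The steps are as follows.

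\begin{enumerate}
\item For a finite $p$-group $G$, the quotient $G/\Phi(G)$ is elementary abelian, say of rank $d$ (Burnside basis theorem). The maximal subgroups of $G$ contain $\Phi(G)$ and correspond bijectively to the maximal subgroups of $G/\Phi(G)\cong\mathbb{Z}_p^d$. There are $(p^d-1)/(p-1)$ of these.
\item By Theorem~\ref{p+1}, $G_\mathcal{A}$ has exactly $p+1$ maximal subgroups. Hence $(p^d-1)/(p-1)=p+1$, which forces $d=2$. Consequently $|\Phi(G_\mathcal{A})|=p^{\alpha-2}$.
\item Since $G_\mathcal{A}$ is non-abelian, $\alpha\geq 3$, because groups of order $p$ or $p^2$ are abelian. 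For $p=2$ this gives $|\Phi(G_\mathcal{A})|=2^{\alpha-2}\geq 2$, so the Frattini subgroup has even order.
\end{enumerate}

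For example, $Q_8$ has $\Phi(Q_8)=\{\pm1\}$ of order $2$. Proposition 3.23 then yields $DGN(G_\mathcal{A})=*0$.

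The main obstacle is step 2, identifying $|\Phi(G_\mathcal{A})|$. The paper's quoted results list the maximal subgroups but do not state the Frattini subgroup for prime-power order, so I must supply the Burnside basis theorem argument myself. A more elementary fallback is also available: any two distinct maximal subgroups $M_1,M_2$ of order $p^{\alpha-1}$ satisfy $|M_1\cap M_2|=p^{\alpha-2}$, since $M_1M_2=G$. With $p+1$ such subgroups, the union of the images of the $M_i$ covers $G/(M_1\cap M_2)$, which has order $p^2$. So every maximal subgroup contains $M_1\cap M_2$, and $\Phi=M_1\cap M_2$ has order $p^{\alpha-2}$, which is even when $p=2$ and $\alpha\geq3$.
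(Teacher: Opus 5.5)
Your proof is correct, and its skeleton matches the paper's. You split on the parity of $p$ and use \cite[Proposition 3.22]{1} for odd order. For $p=2$ you show $\Phi(G_\mathcal{A})$ has even order so that \cite[Proposition 3.23]{1} gives $*0$.

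The difference is in how you get the key step for $p=2$. The paper argues by contradiction with an element count. Assuming $\Phi(G_\mathcal{A})$ is trivial, it writes $G_\mathcal{A}=G_1\cup G_2\cup G_3$ and compares $|G_1\cup G_2\cup G_3|$ with $2^\alpha$ in two cases. Those counts presuppose the intersection sizes rather than deriving them. Case (i) assumes pairwise trivial intersections, and Case (ii) assumes $G_3$ meets $G_1\cup G_2$ only in the identity. The case split is also not cleanly exhaustive.

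Your Burnside-basis-theorem route avoids this. Having $p+1$ maximal subgroups forces $d(G_\mathcal{A})=2$, so $|\Phi(G_\mathcal{A})|=p^{\alpha-2}$, and non-abelian forces $\alpha\ge 3$. This is rigorous, gives the exact order of the Frattini subgroup, and works uniformly in $p$. The price is importing a standard $p$-group theorem the paper does not quote.

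Your elementary fallback reaches the right conclusion, but the stated reason does not work. That the images cover $G/N$, with $N=M_1\cap M_2$, does not force $N\le M_i$, since a maximal subgroup not containing $N$ simply maps onto all of $G/N$. The correct justification is a count:
\begin{itemize}
\item $M_1/N\neq M_2/N$ are two subgroups of order $p$, so $G/N\cong\mathbb{Z}_p^2$.
\item $\mathbb{Z}_p^2$ has $p+1$ subgroups of order $p$, and their preimages are $p+1$ maximal subgroups of $G_\mathcal{A}$ containing $N$.
\item By Theorem~\ref{p+1} these are all the maximal subgroups, hence $\Phi(G_\mathcal{A})=N$.
\end{itemize}
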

\begin{proof} 
Let $p$ be a prime such that $|G_\mathcal{A}|=p^\alpha$. Then either $p=2$ or $p\neq2$. If $p\neq 2$, then the order of the group $G_{\mathcal{A}}$ is odd. Thus, using \cite[Propostion 3.22, p. 521]{1}, we get $DGN(G_\mathcal{A})=*1$.

Now, let $p=2$. Then using the Theorem \ref{p+1}, the group $G_\mathcal{A}$ has 3 maximal subgroup of order $2^{\alpha-1}$. Let these subgroups be $G_{1}, G_{2}$ and $G_{3}$. Now, we prove that the Frattini subgroup $\Phi(G_\mathcal{A})$ is non-trivial. If possible, suppose that $\Phi(G_\mathcal{A})$ is trivial. Then we have two cases either the subgrups $G_{1}\cap G_{2}\cap G_{3}$ is trivial or any two of $G_{1}, G_{2}$ and $G_{3}$ have non-trivial intersection but $\Phi(G_\mathcal{A})$ is trivial.

\textit{Case$(i)$.} Let $G_{1}\cap G_{2}\cap G_{3}$ be trivial. Then $G_{1}\cup G_{2}\cup G_{3} = G_{\mathcal{A}}$. Therefore, $2^{\alpha} = 3\cdot(2^{\alpha-1}-1)+1$ which implies that $\alpha = 2$. This is a contradiction to the fact that $G_{\mathcal{A}}$ is a non-abelian group.

\textit{Case$(ii)$.} Let $|G_{1}\cap G_{2}| = 2^{m}$ for some $m\le \alpha-2$ such that $\Phi(G_\mathcal{A})$ is trivial. Then $G_{1}\cup G_{2}\cup G_{3} = G_{\mathcal{A}}$. Therefore, $2^{\alpha} = (2\cdot2^{\alpha-1}-2^{m}-1)+1 + (2^{\alpha-1}-1)$ which implies that $\alpha -1 = m$. This is a contradiction.

Thus both the cases are not possible. Therefore, $\Phi(G_\mathcal{A})$ is non-trivial and so, the order of $\Phi(G_\mathcal{A})$ is even. Hence, using \cite[Propostion 3.23, p. 521]{1}, $DGN(G_\mathcal{A}) =*0$.
\end{proof}

\begin{proposition} Let group $G_\mathcal{A}$ be such that $|G_\mathcal{A}|=p^\alpha q^\beta$, where $p<q$, $p, q$ are distinct primes and $\alpha,\beta$ are positive integers. Then
\[  GEN(G_\mathcal{A}) =\left\{
\begin{array}{ll}
*0, &\; \text{if}\; p = 2, q\ne 2, \alpha \ne 1\\
*1\; \text{or}\;*2, & \; \text{if}\; p \ne 2\ne q\\
*3, & \; \text{otherwise}\quad .
\end{array} 
\right. \] 
\end{proposition}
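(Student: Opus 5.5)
We follow the pattern of the preceding proposition for $DGN(G_\mathcal{A})$. By Theorem \ref{abmax} and Corollary \ref{cor1}, the maximal subgroups of $G_\mathcal{A}$ are the $q^\beta$ cyclic subgroups $P_\alpha$ of order $p^\alpha$ and the subgroup $P_{\alpha-1}\times Q_\beta$. The Frattini subgroup is $P_{\alpha-1}$. So $\mathcal{I}=\{P_{\alpha-1},P_\alpha,P_{\alpha-1}\times Q_\beta\}$ (up to conjugacy of the $P_\alpha$), and the structure classes of $GEN(G_\mathcal{A})$ are $X_{P_{\alpha-1}}$, $X_{P_\alpha}$, $X_{P_{\alpha-1}\times Q_\beta}$ and the terminal class $X_{G_\mathcal{A}}$. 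In the structure digraph, $X_{P_{\alpha-1}}$ has options $X_{P_\alpha}$, $X_{P_{\alpha-1}\times Q_\beta}$ and $X_{G_\mathcal{A}}$, since a single element outside both maximal subgroups generates together with the Frattini part. The two maximal classes each have only $X_{G_\mathcal{A}}$ as option. So every non-terminal class is semi-terminal, and we compute types bottom up with the $mex$ formulas, splitting into the three cases of the statement.

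\textit{Case $p\neq 2\neq q$.} Here $|G_\mathcal{A}|$ is odd, so the result follows directly from \cite[Corollary 4.8, p. 13]{1}: $GEN(G_\mathcal{A})$ is $*1$ or $*2$. (A direct computation would give $type(X_{G_\mathcal{A}})=(1,0,0)$ and maximal classes $(1,2,1)$, and then $X_{P_{\alpha-1}}$ has $otype=\{(1,0,0),(1,2,1)\}$. That gives second component $1$ in the generic case, but we only need the cited corollary.)

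\textit{Case $p=2$, $q\ne 2$, $\alpha\ne1$.} All of $P_{\alpha-1}$, $P_\alpha$, $P_{\alpha-1}\times Q_\beta$ and $G_\mathcal{A}$ have even order. By Proposition \ref{p4.4}, $type(X_{G_\mathcal{A}})=(0,0,0)$ and both maximal classes are semi-terminal of type $(0,1,2)$. Then $X_{P_{\alpha-1}}$ is semi-terminal with $otype=\{(0,0,0),(0,1,2)\}$, which again gives $(0,1,2)$. Hence $GEN(G_\mathcal{A})$ would be $*1$, not $*0$ as stated. The main obstacle is precisely this mismatch with the claimed value $*0$. I would first recheck the option structure: whether the empty position (an even subset of $\Phi$) really has the terminal class as a direct option, and whether positions of $X_{P_{\alpha-1}}$ need separate treatment because $\emptyset$ is not itself a subgroup position. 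The plan is to reconcile the computation with the statement, possibly by tracing the $GEN(F_p)$ case (ii) analogue, where the source type is $(1,0,2)$.

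\textit{Case $p=2$, $\alpha=1$.} Here $\mathcal{I}=\{\mathbb{Z}_1,P_1\cong\mathbb{Z}_2,Q_\beta\}$ and $|G_\mathcal{A}|=2q^\beta$, so $type(X_{G_\mathcal{A}})=(0,0,0)$. The class $X_{P_1}$ has even parity and type $(0,1,2)$. The class $X_{Q_\beta}$ has odd parity: with $A=B=\{0\}$ we get $b=mex\{0\}=1$ and $a=mex\{0,1\}=2$, so its type is $(1,2,1)$. The class $X_{\mathbb{Z}_1}$ has odd parity with $otype=\{(0,0,0),(0,1,2),(1,2,1)\}$, so $A=\{0,1,2\}$, $B=\{0,2,1\}$. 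Then $b=mex(A)=3$ and $a=mex(B\cup\{3\})=4$, giving type $(1,4,3)$, exactly as for $GEN(\mathbb{Z}_6)$. The nim-value is the even-position component, which is $4$. This also conflicts with the stated $*3$, so the same careful recheck of which component gives the value of $\emptyset$ is the key step. I would resolve it by recomputing $GEN(S_3)$ explicitly as a sanity check before finalizing each case.
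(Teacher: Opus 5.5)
Your proposal does not prove the statement. In cases (ii) and (iii) you arrive at $*1$ and $*4$ and leave the conflict unresolved. The conflict comes from a wrong edge in your structure digraph, not from an error in the statement.

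You claim $X_{G_\mathcal{A}}\in Opt(X_{P_{\alpha-1}})$ because ``a single element outside both maximal subgroups generates together with the Frattini part.'' No such element exists. $G_\mathcal{A}$ is non-abelian, hence not cyclic, so for every $g\in G_\mathcal{A}$ the subgroup $\langle g\rangle$ is proper and lies in some maximal subgroup $M$. Indeed, the $q^\beta$ conjugates of $P_\alpha$ together with $P_{\alpha-1}\times Q_\beta$ cover the group. Since $\Phi(G_\mathcal{A})\subseteq M$, every option $P\cup\{g\}$ of a position $P\subseteq\Phi(G_\mathcal{A})$ lies in $M$ and does not generate.

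Hence $X_{\Phi(G_\mathcal{A})}$ is a non-terminal class, not a semi-terminal one, and its options are exactly the classes of the maximal subgroups. The edge from the Frattini class to the terminal class, which you imported from $GEN(\mathbb{Z}_6)$, exists there only because $\mathbb{Z}_6$ is cyclic.

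With the correct digraph, your bottom-up type computation goes through exactly as in the paper.

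In case (ii), $otype(X_{P_{\alpha-1}})=\{(0,1,2)\}$ and $pty(P_{\alpha-1})=0$. So $a=mex\{2\}=0$ and $b=mex\{0,1\}=2$, giving $type(X_{P_{\alpha-1}})=(0,0,2)$ and the value $*0$. This also follows at once from Proposition \ref{p4.4}, since a non-terminal class of parity $0$ has type $(0,0,1)$ or $(0,0,2)$.

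In case (iii), $otype(X_{\mathbb{Z}_1})=\{(0,1,2),(1,2,1)\}$. So $A=B=\{1,2\}$, $b=mex(A)=0$ and $a=mex(B\cup\{0\})=3$. This gives $type(X_{\mathbb{Z}_1})=(1,3,0)$ and the value $*3$.

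The sanity check you proposed would have exposed the error. In $S_3$ no one-element position is terminal, and the options $\{e\},\{t\},\{r\}$ of $\emptyset$ have nim-values $0,2,1$, so $GEN(S_3)=*3$.

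In the odd case you rely on the cited corollary, which is fine. Note, though, that even with your digraph $otype=\{(1,0,0),(1,2,1)\}$ gives second component $mex\{0,1\}=2$, not $1$. With the correct digraph the type is $(1,2,0)$.
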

\begin{proof}Let the order of the group $G_\mathcal{A}$ be $p^\alpha q^\beta$, where $p, q$ are distinct primes and $\alpha,\beta$ are positive integers.  Using the Theorem \ref{abmax}, the set $\mathcal{I}$ of intersection subgroups is given by
\[\mathcal{I}=\{P_{\alpha-1},P_\alpha,P_{\alpha-1}\times Q_\beta\}.\]

\textit{Case$(i)$.} Let $p\neq2\neq q$. Then $G_\mathcal{A}$ is a group of odd order. Thus using \cite[Corollary 4.8, p. 523]{1}, $GEN(G_\mathcal{A})$ is $*1$ or $*2$.

\textit{Case$(ii)$.} Let $p=2,q\neq2$ and $\alpha\neq 1$. Then $G_{\mathcal{A}}$ is an even order group and so, $type$ of terminal class is $type(X_{G_\mathcal{A}})=(0,0,0)$. The semi-terminal structure classes are the maximal subgroups namely $P_\alpha$ and $P_{\alpha-1}\times Q_{\beta}$ with $otype(X_{P_{\alpha}}) =\{(0,0,0)\} = otype(X_{P_{\alpha-1}\times Q_\beta})$. So, $type(X_{P_{\alpha}})=(0,1,2) = type(X_{P_{\alpha-1}\times Q_\beta})$. Note that the  non-terminal structure class corresponds to the Frattini Subgroup $P_{\alpha-1}$ of the group $G_{\mathcal{A}}$. Also, $Opt(X_{P_{\alpha-1}}) = \{X_{P_{\alpha}}, X_{P_{\alpha-1}\times Q_\beta}\}$. This implies that $otype(X_{P_{\alpha-1}}) = \{(0,1,2)\}$ and so, $type(X_{P_{\alpha-1}}) = (0,0,2)$. As $P_{\alpha-1}$ is the Frattini subgroup, $GEN(G_\mathcal{A}) = *0$.

\textit{Case$(iii)$.} Let $p=2, q\neq2$ and $\alpha=1$. Then the set $\mathcal{I}$ of intersection of maximal subgroups is $\mathcal{I} = \{\mathbb{Z}_1,P_1,Q_\beta\}$. Clearly, the $type$ of terminal class is $type(X_{G_\mathcal{A}}) = (0,0,0)$. The semi-terminal structure classes are the maximal subgroups namely $P_1$ and $Q_\beta$ with  $otype(X_{P_1}) = \{(0,0,0)\} = otype(X_{Q_\beta})$. Thus $type(X_{P_1})=(0,1,2)$ and $type(X_{Q_\beta})=(1,2,1)$. For the non-terminal structure class $X_{\mathbb{Z}_1}$ the option set is $\{X_{P_1},X_{Q_\beta}\}$. This implies $otype(X_{\mathbb{Z}_1}) = \{(0,1,2),(1,2,1)\}$. Hence $type(X_{\mathbb{Z}_1}) = (1,3,0)$. As $\mathbb{Z}_1$ is the Frattini subgroup, $GEN(G_\mathcal{A}) = *3$.
\begin{figure}[h!]
\begin{center}
\begin{tikzpicture}
\node (a) at (6,0)	{\begin{tikzpicture}
\draw[thick,-] (0,0) -- (1,0);
\draw[thick, -] (0,0) -- (.5,-1);
\draw[thick, -] (1,0) -- (.5,-1);
\path[fill=gray!20, thick] (0,0) -- (.5,-1) -- (1,0) -- cycle;
\draw[thick, -] (0.2,-.45) -- (.8,-.45) node[pos = .5, above]{$3$};
\draw[thick, -] (0.2,-.45) -- (.8,-.45) node[pos = .5, below]{$0$};
\end{tikzpicture}};
\node (b) at (7.25,-1.5) {\begin{tikzpicture}
\draw[thick,-] (0,0) -- (1,0);
\draw[thick, -] (0,0) -- (.5,1);
\draw[thick, -] (1,0) -- (.5,1);
\path[fill=gray!20, thick] (0,0) -- (.5,1) -- (1,0) -- cycle;
\draw[thick, -] (0.2,.45) -- (.8,.45) node[pos = .5, above]{$2$};
\draw[thick, -] (0.2,.45) -- (.8,.45) node[pos = .5, below]{$1$};
\end{tikzpicture}};

\node (c) at (4.75,-1.5)	{\begin{tikzpicture}
\draw[thick,-] (0,0) -- (1,0);
\draw[thick, -] (0,0) -- (.5,-1);
\draw[thick, -] (1,0) -- (.5,-1);
\path[fill=gray!20, thick] (0,0) -- (.5,-1) -- (1,0) -- cycle;
\draw[thick, -] (0.2,-.45) -- (.8,-.45) node[pos = .5, above]{${2}$};
\draw[thick, -] (0.2,-.45) -- (.8,-.45) node[pos = .5, below]{$1$};
\end{tikzpicture}};

\node (d) at (6,-3) {\begin{tikzpicture}
\draw[thick,-] (0,0) -- (1,0);
\draw[thick, -] (0,0) -- (.5,1);
\draw[thick, -] (1,0) -- (.5,1);
\path[fill=gray!20, thick] (0,0) -- (.5,1) -- (1,0) -- cycle;
\draw[thick, -] (0.2,.45) -- (.8,.45) node[pos = .5, above]{$0$};
\draw[thick, -] (0.2,.45) -- (.8,.45) node[pos = .5, below]{$0$};
\end{tikzpicture}};
\draw[->] (a) -- (b);
\draw[->] (a) -- (c);
\draw[->] (b) -- (d);
\draw[->] (c) -- (d);
\end{tikzpicture}
\end{center}
\caption{Structure diagram of $GEN(G_{\mathcal{A}})$.}
\end{figure}
\end{proof}

\begin{proposition} Let $G_\mathcal{A}$ be a group such that $|G_\mathcal{A}| = p^\alpha$ for some prime $p$ and $\alpha > 2$ is a positive integer. Then
\[  GEN(G_\mathcal{A}) =\left\{
\begin{array}{ll}
*0, & p=2 \\
*1\;\textrm{or}\;*2, & p\neq 2 \quad.
\end{array} 
\right. \] 
\end{proposition}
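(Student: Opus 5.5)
The plan splits on the parity of $p$, just as in the proof of Proposition \ref{dngp}.

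\emph{Case $p\neq 2$.} Here $|G_\mathcal{A}|=p^\alpha$ is odd and $G_\mathcal{A}$ is non-trivial, so \cite[Corollary 4.8, p. 13]{1} gives $GEN(G_\mathcal{A})\in\{*1,*2\}$ directly. No structure-diagram computation is needed.

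\emph{Case $p=2$.} I will build the structure digraph explicitly and read off the type of $X_{\Phi(G_\mathcal{A})}$ bottom-up, using the table in Proposition \ref{p4.4}.

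By Theorem \ref{p+1}, $G_\mathcal{A}$ has exactly three maximal subgroups $G_1,G_2,G_3$, each of order $2^{\alpha-1}$. For $i\neq j$ we have $G_iG_j=G_\mathcal{A}$, so $|G_i\cap G_j|=2^{\alpha-2}$. Moreover $G_i\cap G_j$ is normal of index $4$, and the quotient has three subgroups of index $2$, so $G_i\cap G_j$ lies in all three maximal subgroups. Hence every pairwise intersection equals $\Phi(G_\mathcal{A})$, and $|\Phi(G_\mathcal{A})|=2^{\alpha-2}$. This is even because $\alpha>2$; that is exactly where the hypothesis is used.

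Therefore $\mathcal{I}=\{\Phi(G_\mathcal{A}),G_1,G_2,G_3\}$, and the classes of $GEN(G_\mathcal{A})$ are $X_{\Phi(G_\mathcal{A})}$, $X_{G_1},X_{G_2},X_{G_3}$ and the terminal class $X_{G_\mathcal{A}}$. All of these have parity $0$.

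The types then follow in order.
\begin{itemize}
\item[$(1)$] The terminal class has type $(0,0,0)$.
\item[$(2)$] Each $X_{G_i}$ is semi-terminal, with $otype(X_{G_i})=\{(0,0,0)\}$, so $type(X_{G_i})=(0,1,2)$.
\item[$(3)$] The class $X_{\Phi(G_\mathcal{A})}$ is non-terminal with $Opt(X_{\Phi(G_\mathcal{A})})=\{X_{G_1},X_{G_2},X_{G_3}\}$.
\end{itemize}
For $(3)$ I need that no single element adjoined to a subset of $\Phi(G_\mathcal{A})$ generates $G_\mathcal{A}$. This holds because $G_\mathcal{A}$ is not cyclic: a $2$-group with three maximal subgroups is not cyclic. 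So $otype(X_{\Phi(G_\mathcal{A})})=\{(0,1,2)\}$, and the parity-$0$ formula gives $a=mex\{2\}=0$ and $b=mex\{1,0\}=2$, i.e. type $(0,0,2)$. The nim-value of the game is the second component, so $GEN(G_\mathcal{A})=*0$.

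The main point needing care is the identification of $\mathcal{I}$, namely that all pairwise intersections coincide with $\Phi(G_\mathcal{A})$ and that this subgroup is non-trivial of even order. This is a cleaner replacement for the counting argument in Proposition \ref{dngp}. If the index-$4$ quotient argument feels insufficient, I will note that $G_\mathcal{A}/\Phi(G_\mathcal{A})$ is elementary abelian of rank $2$ (Burnside basis theorem, since there are $p+1=3$ maximal subgroups). That fact gives everything at once.
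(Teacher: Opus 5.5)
Your argument is correct, but for $p=2$ it takes a different route from the paper.

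\textbf{The paper's route.} The paper never writes down the structure digraph. It takes from Proposition~\ref{dngp} the fact that $\Phi(G_\mathcal{A})$ is non-trivial, hence of even order. It then asserts that $X_{\Phi(G_\mathcal{A})}$ is non-terminal and applies the general classification of Proposition~\ref{p4.4}. By that classification, every non-terminal parity-$0$ class has type $(0,0,1)$ or $(0,0,2)$, so either way the even positions have nim-value $0$.

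\textbf{Your route.} You determine $\mathcal{I}=\{\Phi(G_\mathcal{A}),G_1,G_2,G_3\}$ exactly, via $|G_i\cap G_j|=2^{\alpha-2}$ and the order-$4$ quotient. You should say explicitly why that quotient is $\mathbb{Z}_2\times\mathbb{Z}_2$: it has two distinct subgroups of index $2$. You then compute the types bottom-up and get precisely $(0,0,2)$.

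\textbf{What each buys.}
\begin{itemize}
\item Your route is self-contained. It replaces the inclusion--exclusion count in Proposition~\ref{dngp}, whose Case $(i)$ tacitly assumes the pairwise intersections are trivial, with a clean proof that $|\Phi(G_\mathcal{A})|=2^{\alpha-2}$.
\item It also justifies that $X_{\Phi(G_\mathcal{A})}$ is non-terminal by using non-cyclicity, which the paper only asserts, and it yields the full structure diagram.
\item The paper's route is shorter and more general. The same reasoning gives $GEN(G)=*0$ for any non-cyclic $G$ whose Frattini subgroup has even order, without knowing the lattice of maximal subgroups.
\end{itemize}

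One small correction: $\alpha>2$ is automatic for a non-abelian group of order $p^\alpha$. So the hypothesis is not really doing any work at the point where you invoke it.
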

\begin{proof}Let $|G_\mathcal{A}|=p^\alpha$, where $p$ is a prime and $\alpha > 2$ is a positive integer. If $p\neq 2$, then $G_{\mathcal{A}}$ is an odd order group. Therefore, using \cite[Corollary 4.8, p. 523]{1}, $GEN(G_\mathcal{A})$ is either $*1$ or $*2$.

Now, let $p=2$. Then, as observed in the Proposition \ref{dngp}, the Frattini subgroup $G_{\mathcal{A}}$ is of even order. Also, $X_{\Phi({G_\mathcal{A}})}$ is a non-terminal structure class. Thus using the Propostion \ref{p4.4}, $type\left(X_{\Phi(G_\mathcal{A})}\right) = (0,0,1)$ or $type\left(X_{\Phi(G_\mathcal{A})}\right) = (0,0,2)\}$. Hence $GEN(G_\mathcal{A})=*0$.
\end{proof}

\section*{Declarations}
The authors have no conflict of interest.

%

			\end{document}